\documentclass[11pt]{article}

\usepackage[margin=1in]{geometry}

\usepackage[utf8]{inputenc}
\usepackage[T1]{fontenc}
\usepackage[english]{babel}

\usepackage{amsmath,amssymb,amsthm,mathtools}
\usepackage{mathrsfs}
\usepackage{enumitem}
\usepackage{graphicx}
\usepackage[all]{xy}
\usepackage{booktabs}
\usepackage{tabularx}
\usepackage{xcolor}
\usepackage{url}
\usepackage{hyperref}
\usepackage{authblk}

\theoremstyle{plain}
\newtheorem{theorem}{Theorem}[section]
\newtheorem{proposition}[theorem]{Proposition}
\newtheorem{corollary}[theorem]{Corollary}

\theoremstyle{definition}
\newtheorem{definition}[theorem]{Definition}
\newtheorem{example}[theorem]{Example}

\theoremstyle{remark}
\newtheorem{remark}[theorem]{Remark}

\newtheorem{notation}[theorem]{Notation}

\newcommand{\id}{\operatorname{id}}

\DeclareMathOperator{\Glike}{Glike}
\DeclareMathOperator{\Slike}{Slike}
\DeclareMathOperator{\Conj}{Conj}
\DeclareMathOperator{\Halo}{Halo}

\DeclareMathOperator{\Prim}{Prim}
\DeclareMathOperator{\Adj}{Adj}

\newcommand{\fnm}[1]{#1}
\newcommand{\sur}[1]{#1}
\newcommand{\jyear}[1]{}
\newcommand{\equalcont}[1]{}
\newcommand{\orgdiv}[1]{#1}
\newcommand{\orgname}[1]{#1}
\newcommand{\orgaddress}[1]{#1}
\newcommand{\street}[1]{#1}
\newcommand{\city}[1]{#1}
\newcommand{\country}[1]{#1}

\title{Cocommutative Hopf Dialgebras and Rack Combinatorics}

\author[1]{\fnm{Jos\'e Gregorio }\sur{Rodr\'iguez-Nieto}\thanks{\texttt{jgrodrig@unal.edu.co}}}

\author[1]{\fnm{Olga Patricia }\sur{Salazar-D\'iaz}\thanks{\texttt{opsalazard@unal.edu.co}}}

\author[2]{\fnm{Andr\'es }\sur{Sarrazola-Alzate}\thanks{\texttt{andres.sarrazola@eia.edu.co}}}

\author[3]{\fnm{Ra\'ul }\sur{Vel\'asquez}\thanks{\texttt{raul.velasquez@udea.edu.co}}}

\affil[1]{\orgdiv{Departamento de Matem\'aticas}, \orgname{Universidad Nacional de Colombia},
	\orgaddress{\street{Carrera 65 No. 59A-110}, \city{Medell\'in}, \country{Colombia}}}

\affil[2]{\orgname{STEAM School, Universidad EIA},
	\orgaddress{\street{Calle 23 AA Sur Nro. 5-200, Kil\'ometro 2+200 Variante al Aeropuerto Jos\'e Mar\'ia C\'ordova},
		\city{Envigado}, \country{Colombia}}}

\affil[3]{\orgdiv{Instituto de Matem\'aticas}, \orgname{Universidad de Antioquia},
	\orgaddress{\street{Calle 67 No. 53-108}, \city{Medell\'in}, \country{Colombia}}}

\date{}

\begin{document}
	
	
	\maketitle
	
	\begin{abstract}
		We study cocommutative Hopf dialgebras through generalized digroups and rack
		combinatorics. We prove that the rack functor obtained from the adjoint rack
		bialgebra factorizes through the digroup of group-like elements. More precisely,
		for every cocommutative Hopf dialgebra \(A\), the rack of set-like elements of
		its adjoint rack bialgebra is naturally isomorphic to the conjugation rack of
		the digroup \(\Glike(A)\). For finite generalized digroups
		\(D\simeq G\times E\), with \(G\) acting on the halo \(E\), we derive explicit
		formulas for the conjugation rack, its inner group, left-translation cycle
		index, fixed-point polynomial, orbit count and subrack structure. Finally, we
		construct the digroup algebra \(K[D]\), prove that it is a cocommutative Hopf
		dialgebra, and show that \(\Glike(K[D])=D\).
	\end{abstract}
	
	\medskip
	\noindent\textbf{Keywords:} Hopf dialgebras,
	rack bialgebras,
	generalized digroups,
	digroups,
	conjugation racks,
	subracks.
	
	\medskip
	\noindent\textbf{MSC 2020:} 16T05, 20N99, 20M10, 57K12.
	
	\section{Introduction}
	
	Hopf algebras provide one of the classical algebraic mechanisms by which
	multiplicative and comultiplicative structures interact. In the cocommutative
	case, the group-like elements of a Hopf algebra form a group, while primitive
	elements form a Lie algebra. This familiar picture suggests a broader question:
	what kind of global and combinatorial structures arise when the associative
	multiplication is replaced by the two products of a dialgebra? The purpose of
	this paper is to study this question for cocommutative Hopf dialgebras and to
	show that their rack-theoretic shadow is controlled by generalized digroups and
	their conjugation racks.
	
	The dialgebraic framework originates in the relationship between associative
	dialgebras and Leibniz algebras. Loday introduced dialgebras as the associative
	counterpart of Leibniz algebras, in the same way that associative algebras give
	rise to Lie algebras through the commutator construction \cite{Loday2001}.
	In this setting, the two products \(\vdash\) and \(\dashv\) encode the Leibniz
	bracket through
	\[
	[x,y]=x\vdash y-y\dashv x.
	\]
	Hopf dialgebras refine this picture by adding coalgebraic data and antipode
	identities adapted to the two products. In the cocommutative case,
	Alexandre--Bordemann--Rivi\`ere--Wagemann showed that every Hopf dialgebra
	carries an adjoint rack bialgebra structure \cite{ABRW1}. This construction is
	the Hopf-dialgebraic starting point of the present work.
	
	Racks and quandles appear naturally in topology, knot theory, Hopf-algebraic
	classification problems, and self-distributive algebra. The knot quandle of
	Joyce and the quandle cohomology invariants of Carter--Jelsovsky--Kamada--
	Langford--Saito are fundamental examples showing how self-distributivity
	encodes geometric information \cite{Joyce1982,Carter2003}. The homotopy and
	cohomological theory of racks was further developed through rack spaces and
	rack cohomology \cite{FennRourkeSanderson1995,EtingofGrana2003}. Racks also
	enter the theory of pointed Hopf algebras through Nichols algebras associated
	with rack data and cocycles \cite{AndruskiewitschGrana2003}. These connections
	motivate the search for explicit algebraic sources of racks and for
	combinatorial invariants attached to them.
	
	Generalized digroups provide one such source. They are algebraic structures
	with two associative products, a possibly non-singleton set of bar-units called
	the halo, and one-sided inverse data relative to bar-units. They were introduced
	and developed as a natural enlargement of digroups and as global objects
	adapted to dialgebraic and Leibniz-type phenomena \cite{SVW,RSV}. A structure
	theorem identifies every generalized digroup, after choosing a bar-unit, with a
	product
	\[
	G\times E,
	\]
	where \(G\) is an ordinary group and \(E\) is the halo equipped with a
	\(G\)-action \cite{RSV4}. This decomposition makes the conjugation rack of a
	generalized digroup completely explicit:
	\[
	(g,\alpha)\triangleright(h,\beta)
	=
	(ghg^{-1},g\bullet\beta).
	\]
	The recent correspondence between racks and generalized digroups gives the
	categorical background for this construction \cite{RSGD}.
	
	The first main contribution of this paper is to show that the rack functor
	attached to a cocommutative Hopf dialgebra factorizes through the digroup of
	group-like elements. More precisely, if \(A\) is a cocommutative Hopf
	dialgebra, then its group-like elements form a digroup under the restricted
	dialgebra products, with inverse induced by the antipode. The adjoint rack
	bialgebra construction, followed by passage to set-like elements, agrees with
	the conjugation rack of this digroup. In functorial form, we obtain a natural
	factorization
	\[
	\mathbf{HopfDialg}
	\xrightarrow{\ \Glike\ }
	\mathbf{Dig}
	\hookrightarrow
	\mathbf{gDig}
	\xrightarrow{\ \Conj\ }
	\mathbf{Rack}.
	\]
	Thus the rack associated with a cocommutative Hopf dialgebra is not merely an
	external self-distributive structure; it is governed by an intermediate
	digroup.
	
	The second main contribution is combinatorial. For a finite generalized
	digroup \(D\simeq G\times E\), the conjugation rack admits an explicit
	description in terms of group conjugation on \(G\) and the action of \(G\) on
	the halo \(E\). We use this description to compute the inner permutation group,
	the left-translation cycle index, a fixed-point polynomial, an orbit formula,
	and a structural criterion for subracks. The cycle-index viewpoint is inspired
	by classical enumerative methods \cite{Read1968,HararyPalmer}, but it is
	adapted here to the family of left translations of a rack rather than to an
	arbitrary permutation group. In particular, if \(c_g\) denotes conjugation by
	\(g\) on \(G\) and \(\rho(g)\) denotes the permutation of \(E\), then the left
	translation by \((g,\alpha)\) is
	\[
	L_{(g,\alpha)}=c_g\times \rho(g),
	\]
	and its cycle structure is determined by the cycle structures of these two
	permutations.
	
	The final part of the paper studies the digroup algebra construction. Starting
	from a digroup \(D\), we form the free vector space \(K[D]\), extend both
	products linearly, and define
	\[
	\Delta(x)=x\otimes x,\qquad
	\varepsilon(x)=1,\qquad
	S(x)=x^{-1}
	\]
	on basis elements. This produces a cocommutative Hopf dialgebra. Moreover, its
	group-like elements recover the original digroup:
	\[
	\Glike(K[D])=D.
	\]
	Consequently, the general factorization theorem specializes to the expected
	identity: the rack associated with \(K[D]\) is precisely the conjugation rack of
	\(D\). This shows that the digroup algebra construction is the dialgebraic
	analogue of the usual group algebra construction, at least from the viewpoint
	of group-like elements and conjugation racks.
	
	The paper is organized as follows. Section~2 fixes the preliminary language on
	coalgebras, dialgebras, racks, generalized digroups and categorical
	conventions. Section~3 recalls cocommutative Hopf dialgebras, rack bialgebras,
	the structure theorem for generalized digroups and the conjugation rack
	construction. Section~4 develops the finite combinatorics of conjugation racks
	of generalized digroups, including inner groups, left-translation cycle
	indices, fixed-point polynomials, orbit formulas and subrack structure.
	Section~5 proves the group-like factorization theorem for cocommutative Hopf
	dialgebras and constructs the digroup algebra \(K[D]\), showing that this
	construction recovers the original digroup and its conjugation rack.
	
	\section{Preliminaries}\label{sec:preliminaries}
	
	Throughout the paper, \(K\) denotes a field of zero characteristic and all tensor products are taken
	over \(K\), unless otherwise stated. We collect here the basic algebraic,
	coalgebraic and categorical conventions used in the sequel.
	
	\subsection{Coalgebraic notation}
	
	Let \((C,\Delta,\varepsilon)\) be a coalgebra over \(K\). We use Sweedler
	notation
	\[
	\Delta(c)=\sum_{(c)}c_{(1)}\otimes c_{(2)}
	\qquad
	(c\in C).
	\]
	An element \(g\in C\) is called \textbf{group-like} if
	\[
	\Delta(g)=g\otimes g,
	\qquad
	\varepsilon(g)=1.
	\]
	The set of group-like elements of \(C\) will be denoted by
	\[
	\Glike(C).
	\]
	If \(\xi\in C\) is a fixed group-like element, an element \(x\in C\) is called
	\textbf{\(\xi\)-primitive} if
	\[
	\Delta(x)=x\otimes \xi+\xi\otimes x.
	\]
	The corresponding space of primitive elements is denoted by
	\[
	\Prim(C)
	:=
	\{x\in C\mid \Delta(x)=x\otimes \xi+\xi\otimes x\}.
	\]
	These conventions are standard in Hopf algebra theory; see
	\cite[Chapter~III]{Kassel1995} and \cite[Chapter~1]{Majid1995}.
	
	A linear map
	\[
	f:C\longrightarrow C'
	\]
	between coalgebras is a \textbf{coalgebra morphism} if
	\[
	\Delta_{C'}\circ f=(f\otimes f)\circ\Delta_C
	\qquad
	\text{and}
	\qquad
	\varepsilon_{C'}\circ f=\varepsilon_C.
	\]
	In particular, coalgebra morphisms preserve group-like elements.
	
	\subsection{Dialgebras and Leibniz brackets}
	
	An \textbf{associative dialgebra}, or simply a \textbf{dialgebra}, is a
	\(K\)-vector space \(A\) endowed with two associative bilinear products
	\[
	\vdash,\dashv:A\otimes A\longrightarrow A
	\]
	satisfying the mixed identities
	\[
	(a\vdash b)\vdash c=(a\dashv b)\vdash c,
	\]
	\[
	a\dashv(b\dashv c)=a\dashv(b\vdash c),
	\]
	and
	\[
	(a\vdash b)\dashv c=a\vdash(b\dashv c)
	\]
	for all \(a,b,c\in A\). Dialgebras were introduced by Loday as the associative
	counterpart of Leibniz algebras; see \cite[Section~1]{Loday2001} and
	\cite[Section~1]{LodayPirashvili1993}. For the operadic viewpoint on
	dialgebraic structures, see \cite[Chapter~13]{LodayVallette2012}.
	
	A \textbf{bar-unit} of a dialgebra \(A\) is an element \(\xi\in A\) satisfying
	\[
	\xi\vdash a=a=a\dashv \xi
	\qquad
	(\forall a\in A).
	\]
	The bar-unit need not be unique. The set of all bar-units is called the
	\textbf{halo} of \(A\) and will be denoted by
	\[
	\Halo(A).
	\]
	A bar-unit \(\xi\) is called \textbf{balanced} if
	\[
	a\vdash \xi=\xi\dashv a
	\qquad
	(\forall a\in A).
	\]
	
	Every dialgebra gives rise to a Leibniz bracket by
	\[
	[a,b]:=a\vdash b-b\dashv a.
	\]
	Indeed, the dialgebra identities imply the left Leibniz identity
	\[
	[a,[b,c]]=[[a,b],c]+[b,[a,c]]
	\qquad
	(\forall a,b,c\in A).
	\]
	Thus, if \(A\) is a dialgebra, the vector space \(A\) equipped with this
	bracket is a Leibniz algebra.
	
	\subsection{Racks, quandles and subracks}
	
	A \textbf{rack} is a nonempty set \(X\) endowed with a binary operation
	\[
	\triangleright:X\times X\longrightarrow X
	\]
	such that, for every \(x\in X\), the left translation
	\[
	L_x:X\longrightarrow X,
	\qquad
	L_x(y)=x\triangleright y,
	\]
	is bijective, and the self-distributivity identity
	\[
	x\triangleright(y\triangleright z)
	=
	(x\triangleright y)\triangleright(x\triangleright z)
	\]
	holds for all \(x,y,z\in X\). A rack is a \textbf{quandle} if, in addition,
	\[
	x\triangleright x=x
	\qquad
	(\forall x\in X).
	\]
	Racks and quandles appear naturally in knot theory and self-distributive
	algebra; see \cite[Section~2]{Joyce1982} and
	\cite[Section~1]{FennRourkeSanderson1995}.
	
	A \textbf{rack homomorphism} is a map \(f:X\to Y\) satisfying
	\[
	f(x\triangleright y)=f(x)\triangleright f(y)
	\qquad
	(\forall x,y\in X).
	\]
	A nonempty subset \(Y\subseteq X\) is called a \textbf{subrack} if it is closed
	under the rack operation and under the inverse left translations. In the finite
	case, closure under the rack operation already implies closure under inverse
	left translations. Indeed, if \(X\) is finite and \(Y\subseteq X\) is nonempty
	with
	\[
	x\triangleright y\in Y
	\qquad
	(\forall x,y\in Y),
	\]
	then each \(L_x\) restricts to an injective self-map of the finite set \(Y\),
	and hence to a bijection of \(Y\).
	
	\subsection{Generalized digroups and digroups}
	
	A \textbf{generalized digroup}, or briefly a \textbf{g-digroup}, is a nonempty
	set \(D\) endowed with two associative binary operations
	\[
	\vdash,\dashv:D\times D\longrightarrow D
	\]
	satisfying the mixed identities
	\[
	x\vdash(y\dashv z)=(x\vdash y)\dashv z,
	\]
	\[
	x\dashv(y\dashv z)=x\dashv(y\vdash z),
	\]
	and
	\[
	(x\vdash y)\vdash z=(x\dashv y)\vdash z
	\]
	for all \(x,y,z\in D\). Moreover, there exists at least one element
	\(\xi\in D\) such that
	\[
	x\dashv \xi=x=\xi\vdash x
	\qquad
	(\forall x\in D),
	\]
	and, for every bar-unit \(\xi\) and every \(x\in D\), there exist elements
	\[
	x^{-1}_{r,\xi},\ x^{-1}_{l,\xi}\in D
	\]
	such that
	\[
	x\vdash x^{-1}_{r,\xi}=\xi,
	\qquad
	x^{-1}_{l,\xi}\dashv x=\xi.
	\]
	The set of all bar-units of \(D\) is again called the \textbf{halo} and is
	denoted by
	\[
	\Halo(D).
	\]
	This notion was introduced in \cite[Section~2]{SVW}; related constructions and
	actions appear in \cite[Section~2]{RSV}.
	
	A \textbf{digroup} is a generalized digroup with a distinguished bar-unit
	\(\xi\) such that every \(x\in D\) admits a single inverse \(x^{-1}\) satisfying
	\[
	x\vdash x^{-1}=\xi=x^{-1}\dashv x.
	\]
	This is the balanced case relevant for the group-like elements of
	cocommutative Hopf dialgebras. Digroups and their connection with Leibniz
	algebras and Lie racks are discussed in \cite[Section~2]{Kinyon}.
	
	\subsection{Categorical conventions}
	
	We shall use the following categories. The category \(\mathbf{Rack}\) has racks
	as objects and rack homomorphisms as morphisms. The category \(\mathbf{gDig}\)
	has generalized digroups as objects and morphisms given by maps
	\[
	f:D\longrightarrow D'
	\]
	preserving both products and sending bar-units to bar-units, that is,
	\[
	f(x\vdash y)=f(x)\vdash f(y),\qquad
	f(x\dashv y)=f(x)\dashv f(y),
	\]
	and
	\[
	f(\Halo(D))\subseteq \Halo(D').
	\]
	The full subcategory of \(\mathbf{gDig}\) consisting of digroups with
	distinguished bar-unit-preserving morphisms will be denoted by \(\mathbf{Dig}\).
	
	We denote by \(\mathbf{HopfDialg}\) the category whose objects are
	cocommutative Hopf dialgebras and whose morphisms are coalgebra morphisms
	preserving the distinguished bar-unit, both dialgebra products and the
	antipode. The category \(\mathbf{RackBialg}\) consists of rack bialgebras and
	coalgebra morphisms preserving the distinguished group-like element and the
	rack product. The functorial language used below follows the standard
	conventions of category theory; see \cite[Chapter~I]{MacLane1998}.
	
	\section{Hopf dialgebras, generalized digroups and racks}
	
	This section develops the algebraic framework connecting cocommutative Hopf
	dialgebras, generalized digroups and racks. We first recall the adjoint rack
	bialgebra associated with a cocommutative Hopf dialgebra. We then review the
	structure theorem for generalized digroups and the construction of their
	conjugation racks. These ingredients will be used in the following sections to
	extract finite combinatorial invariants and to prove the group-like
	factorization theorem.
	
	\subsection{Cocommutative Hopf dialgebras and the associated rack functor}
	
	We recall the Hopf-dialgebraic framework used throughout the paper. Our
	conventions follow the cocommutative Hopf dialgebras of
	Alexandre--Bordemann--Rivi\`ere--Wagemann \cite{ABRW1}, written here with a
	distinguished bar-unit \(\xi\).
	
	\begin{definition}\label{def:hopf-dialgebra}
		Let \((A,\Delta,\varepsilon)\) be a cocommutative coalgebra over \(K\), let
		\[
		\overline{\xi}:K\longrightarrow A,
		\qquad
		\overline{\xi}(1_K)=\xi,
		\]
		and let
		\[
		\vdash,\dashv:A\otimes A\longrightarrow A
		\]
		be two \(K\)-linear associative maps. We say that
		\[
		(A,\Delta,\varepsilon,\xi,\vdash,\dashv)
		\]
		is a \textbf{cocommutative bar-unital di-coalgebra} if the following
		conditions hold.
		
		\begin{enumerate}
			\item The products \(\vdash\) and \(\dashv\) satisfy the dialgebra
			identities
			\[
			(a\vdash b)\vdash c=(a\dashv b)\vdash c,
			\]
			\[
			a\dashv(b\dashv c)=a\dashv(b\vdash c),
			\]
			and
			\[
			(a\vdash b)\dashv c=a\vdash(b\dashv c)
			\]
			for all \(a,b,c\in A\).
			
			\item The element \(\xi\) is a balanced bar-unit, that is,
			\[
			\xi\vdash a=a=a\dashv \xi,
			\qquad
			a\vdash \xi=\xi\dashv a
			\qquad
			(\forall a\in A).
			\]
			
			\item The element \(\xi\) is group-like:
			\[
			\Delta(\xi)=\xi\otimes \xi,
			\qquad
			\varepsilon(\xi)=1.
			\]
			
			\item Both products are coalgebra morphisms. More precisely, we endow
			\(A\otimes A\) with the tensor-product coalgebra structure
			\[
			\Delta_{A\otimes A}
			:=
			(\id_A\otimes\tau_{A,A}\otimes\id_A)
			\circ
			(\Delta_A\otimes\Delta_A),
			\qquad
			\varepsilon_{A\otimes A}
			:=
			m_K\circ(\varepsilon_A\otimes\varepsilon_A),
			\]
			where
			\[
			\tau_{A,A}:A\otimes A\longrightarrow A\otimes A,
			\qquad
			\tau_{A,A}(u\otimes v)=v\otimes u,
			\]
			and
			\[
			m_K:K\otimes K\longrightarrow K,
			\qquad
			m_K(\lambda\otimes\mu)=\lambda\mu.
			\]
			Then, for each product
			\[
			\star\in\{\vdash,\dashv\},
			\]
			the map
			\[
			\star:A\otimes A\longrightarrow A
			\]
			is required to be a coalgebra morphism, that is, for each \(\star\in\{\vdash,\dashv\}\), the diagrams
			\[
			\xymatrix{
				A\otimes A \ar[r]^{\star} \ar[d]_{\Delta_{A\otimes A}}
				& A \ar[d]^{\Delta_A} \\
				(A\otimes A)\otimes(A\otimes A) \ar[r]_{\star\otimes\star}
				& A\otimes A
			}
			\]
			and
			\[
			\xymatrix{
				A\otimes A \ar[r]^{\star} \ar[d]_{\varepsilon_{A\otimes A}}
				& A \ar[d]^{\varepsilon_A} \\
				K \ar[r]_{\id_K}
				& K
			}
			\]
			commute.
			
			Equivalently, the following identities hold:
			\[
			\Delta_A\circ\star
			=
			(\star\otimes\star)
			\circ
			(\id_A\otimes\tau_{A,A}\otimes\id_A)
			\circ
			(\Delta_A\otimes\Delta_A),
			\]
			and
			\[
			\varepsilon_A\circ\star
			=
			m_K\circ(\varepsilon_A\otimes\varepsilon_A).
			\]
			In Sweedler notation, this means that, for all \(a,b\in A\),
			\[
			\Delta_A(a\vdash b)
			=
			\sum_{(a),(b)}
			(a_{(1)}\vdash b_{(1)})
			\otimes
			(a_{(2)}\vdash b_{(2)}),
			\qquad
			\varepsilon_A(a\vdash b)
			=
			\varepsilon_A(a)\varepsilon_A(b),
			\]
			and
			\[
			\Delta_A(a\dashv b)
			=
			\sum_{(a),(b)}
			(a_{(1)}\dashv b_{(1)})
			\otimes
			(a_{(2)}\dashv b_{(2)}),
			\qquad
			\varepsilon_A(a\dashv b)
			=
			\varepsilon_A(a)\varepsilon_A(b).
			\]
		\end{enumerate}
		
		If, moreover, there exists a coalgebra morphism
		\[
		S:A\longrightarrow A,
		\]
		called the common antipode, such that
		\[
		\sum_{(a)}a_{(1)}\vdash S(a_{(2)})=\varepsilon(a)\xi,
		\qquad
		\sum_{(a)}S(a_{(1)})\dashv a_{(2)}=\varepsilon(a)\xi
		\]
		for every \(a\in A\), then
		\[
		(A,\Delta,\varepsilon,\xi,\vdash,\dashv,S)
		\]
		is called a \textbf{cocommutative Hopf dialgebra}.
	\end{definition}
	
	\begin{remark}\label{rem:balanced-antipode}
		In the general Hopf dialgebra setting, one may distinguish a right antipode
		for \(\vdash\) and a left antipode for \(\dashv\). In the balanced situation
		considered here these two antipodes are represented by a single coalgebra
		morphism \(S\). This one-antipode hypothesis is precisely what later forces
		the group-like elements to form a digroup rather than only a generalized
		digroup.
	\end{remark}
	
	The primitive elements of a cocommutative Hopf dialgebra carry a Leibniz
	algebra structure. Although this fact will not be the main focus of the paper,
	it explains the link with the Leibniz-theoretic origin of Hopf dialgebras.
	
	\begin{proposition}[cf. {\cite[Proposition 2.2]{ABRW1}}]\label{prop:primitive-leibniz}
		Let
		\[
		A=(A,\Delta,\varepsilon,\xi,\vdash,\dashv,S)
		\]
		be a cocommutative Hopf dialgebra. Then
		\[
		\Prim(A):=\{x\in A\mid \Delta(x)=x\otimes\xi+\xi\otimes x\}
		\]
		is closed under the bracket
		\[
		[x,y]:=x\vdash y-y\dashv x.
		\]
		Hence \(\Prim(A)\) is a Leibniz algebra.
	\end{proposition}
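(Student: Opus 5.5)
We need to show that if x,y are ξ-primitive then [x,y] is again ξ-primitive; the Leibniz identity then follows from the general dialgebra fact recalled in Section 2. The plan is a direct computation using item 4 of Definition~\ref{def:hopf-dialgebra}, which says that both products are coalgebra morphisms.

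First I will compute the coproduct of x ⊢ y. In Sweedler notation we have Δ(x) = x⊗ξ + ξ⊗x and Δ(y) = y⊗ξ + ξ⊗y. Multiplicativity gives four terms:
\[
\Delta(x\vdash y)=(x\vdash y)\otimes(\xi\vdash\xi)+(x\vdash\xi)\otimes(\xi\vdash y)+(\xi\vdash y)\otimes(x\vdash \xi)+(\xi\vdash\xi)\otimes(x\vdash y).
\]
Using the bar-unit axioms, ξ⊢ξ = ξ and ξ⊢y = y, and this simplifies to
\[
(x\vdash y)\otimes\xi+(x\vdash\xi)\otimes y+y\otimes(x\vdash\xi)+\xi\otimes(x\vdash y).
\]

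Similarly, for y ⊣ x we use y⊣ξ = y and ξ⊣ξ = ξ to get
\[
\Delta(y\dashv x)=(y\dashv x)\otimes\xi+y\otimes(\xi\dashv x)+(\xi\dashv x)\otimes y+\xi\otimes(y\dashv x).
\]

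The key step is cancelling the cross terms when we subtract. This is exactly where the \emph{balanced} condition x⊢ξ = ξ⊣x from item 2 is used. It makes the middle terms of the two expansions coincide, leaving
\[
\Delta([x,y])=[x,y]\otimes\xi+\xi\otimes[x,y].
\]
Hence [x,y] ∈ Prim(A). Cocommutativity and the antipode are not needed here.

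Finally, the left Leibniz identity holds for the bracket on all of A, by the remark in the subsection on dialgebras and Leibniz brackets. It therefore restricts to the subspace Prim(A), which is closed under the bracket, so Prim(A) is a Leibniz algebra.

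The only real obstacle is keeping track of which of the identities ξ⊢a = a, a⊣ξ = a, and a⊢ξ = ξ⊣a is applied in each cross term. Without balancedness the terms (x⊢ξ) and (ξ⊣x) would not cancel, so I will check that step most carefully.
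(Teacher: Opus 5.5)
Your proof is correct and follows the paper's argument essentially verbatim. You expand \(\Delta(x\vdash y)\) and \(\Delta(y\dashv x)\) via multiplicativity, simplify with the bar-unit identities, cancel the cross terms using the balanced relation \(x\vdash\xi=\xi\dashv x\), and then restrict the general left Leibniz identity to the closed subspace \(\Prim(A)\); your remark that neither cocommutativity nor the antipode is used is also accurate.
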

	
	\begin{proof}
		Let \(x,y\in\Prim(A)\). Since \(\vdash\) and \(\dashv\) are coalgebra
		morphisms, expanding \(\Delta(x\vdash y)\) and \(\Delta(y\dashv x)\) gives
		\begin{align*}
			\Delta(x\vdash y)
			&=
			(x\vdash y)\otimes\xi
			+
			(x\vdash\xi)\otimes y
			+
			y\otimes(x\vdash\xi)
			+
			\xi\otimes(x\vdash y),
			\\
			\Delta(y\dashv x)
			&=
			(y\dashv x)\otimes\xi
			+
			y\otimes(\xi\dashv x)
			+
			(\xi\dashv x)\otimes y
			+
			\xi\otimes(y\dashv x).
		\end{align*}
		Using the balanced bar-unit relation
		\[
		x\vdash\xi=\xi\dashv x,
		\]
		the middle terms cancel in the difference. Hence
		\[
		\Delta(x\vdash y-y\dashv x)
		=
		(x\vdash y-y\dashv x)\otimes\xi
		+
		\xi\otimes(x\vdash y-y\dashv x).
		\]
		Thus \(x\vdash y-y\dashv x\in\Prim(A)\).
		
		Finally, for every associative dialgebra, the bracket
		\[
		[x,y]=x\vdash y-y\dashv x
		\]
		satisfies the left Leibniz identity as a formal consequence of the three
		mixed dialgebra identities. Hence the restriction of this bracket to the
		closed subspace \(\Prim(A)\) is again Leibniz.
	\end{proof}
	
	We now recall the rack bialgebra associated with a cocommutative Hopf
	dialgebra.
	
	\begin{definition}[{\cite[Definition~2.1]{ABRW1}}]\label{def:rack-bialgebra}
		Let \((B,\Delta,\varepsilon)\) be a cocommutative coalgebra over \(K\), and
		let \(\xi\in B\) be group-like. A \(K\)-linear map
		\[
		\triangleright:B\otimes B\longrightarrow B
		\]
		makes
		\[
		(B,\Delta,\varepsilon,\xi,\triangleright)
		\]
		a \textbf{cocommutative rack bialgebra} if:
		\begin{enumerate}
			\item \(\triangleright\) is a coalgebra morphism;
			\item \(\xi\triangleright a=a\) for all \(a\in B\);
			\item \(a\triangleright\xi=\varepsilon(a)\xi\) for all \(a\in B\);
			\item the self-distributivity identity
			\[
			a\triangleright(b\triangleright c)
			=
			\sum_{(a)}
			(a_{(1)}\triangleright b)\triangleright
			(a_{(2)}\triangleright c)
			\]
			holds for all \(a,b,c\in B\).
		\end{enumerate}
	\end{definition}
	
	\begin{proposition}\label{prop:adjoint-rack-bialgebra}
		Let
		\[
		A=(A,\Delta,\varepsilon,\xi,\vdash,\dashv,S)
		\]
		be a cocommutative Hopf dialgebra. Define
		\[
		a\triangleright b
		:=
		\sum_{(a)}(a_{(1)}\vdash b)\dashv S(a_{(2)}).
		\]
		Then
		\[
		\Adj(A):=(A,\Delta,\varepsilon,\xi,\triangleright)
		\]
		is a cocommutative rack bialgebra.
	\end{proposition}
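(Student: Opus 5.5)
We verify the four axioms of Definition~\ref{def:rack-bialgebra} one at a time. We use cocommutativity of \(\Delta\), the fact that \(\vdash\), \(\dashv\) and \(S\) are coalgebra morphisms, the dialgebra identities, and the balanced bar-unit relations.

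\textbf{Step 1: \(\triangleright\) is a coalgebra morphism.} The map \(\triangleright\) is the composite
\[
(\dashv)\circ(\vdash\otimes S)\circ(\id\otimes\tau\otimes\id)\circ(\Delta\otimes\id)
\]
with a harmless reordering of factors. Each piece is a coalgebra morphism: \(\Delta\) itself is one because \(A\) is cocommutative, and \(\tau\) is one for the same reason. Concretely, we expand \(\Delta(a\triangleright b)\) with the Sweedler rules for \(\vdash,\dashv,S\), obtaining \(\sum (a_{(1)}\vdash b_{(1)})\dashv S(a_{(3)})\otimes (a_{(2)}\vdash b_{(2)})\dashv S(a_{(4)})\). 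Cocommutativity lets us relabel \(a_{(2)}\leftrightarrow a_{(3)}\), which gives \(\sum (a_{(1)}\triangleright b_{(1)})\otimes(a_{(2)}\triangleright b_{(2)})\). The counit identity follows from \(\varepsilon(S(c))=\varepsilon(c)\) and multiplicativity of \(\varepsilon\).

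\textbf{Step 2: the unit axioms.} For \(\xi\triangleright b\) we have \((\xi\vdash b)\dashv S(\xi)=b\dashv S(\xi)\). The antipode identity at \(a=\xi\) gives \(\xi\vdash S(\xi)=\xi\). Since \(S\) is a coalgebra morphism, \(S(\xi)\) is group-like, and we want \(S(\xi)\) to act like a bar-unit on the right. I would show \(b\dashv S(\xi)=b\dashv(\xi\vdash S(\xi))=b\dashv\xi=b\), using the mixed identity \(a\dashv(b\vdash c)=a\dashv(b\dashv c)\). Precisely, \(b\dashv\xi=b\dashv(\xi\vdash S(\xi))=b\dashv(\xi\dashv S(\xi))=(b\dashv\xi)\dashv S(\xi)=b\dashv S(\xi)\), and this works.

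For \(a\triangleright\xi\), we first rewrite \(a_{(1)}\vdash\xi=\xi\dashv a_{(1)}\) using balancedness. Then \(a\triangleright\xi=\sum(\xi\dashv a_{(1)})\dashv S(a_{(2)})=\xi\dashv\sum a_{(1)}\dashv S(a_{(2)})\). Next we convert \(\xi\dashv(a_{(1)}\dashv S(a_{(2)}))\) into \(\xi\dashv(a_{(1)}\vdash S(a_{(2)}))\) via the second dialgebra identity. This equals \(\varepsilon(a)\,\xi\dashv\xi=\varepsilon(a)\xi\).

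\textbf{Step 3: self-distributivity.} I expect this to be the main obstacle. The left side is \(\sum (a_{(1)}\vdash((b_{(1)}\vdash c)\dashv S(b_{(2)})))\dashv S(a_{(2)})\). The right side expands into terms
\[
\big(((a_{(1)}\vdash b_{(1)})\dashv S(a_{(2)}))\vdash(a_{(5)}\vdash c)\dashv S(a_{(6)})\big)\dashv S\big((a_{(3)}\vdash b_{(2)})\dashv S(a_{(4)})\big),
\]
where we use Step 1 to compute \(\Delta(a_{(1)}\triangleright b)\). The strategy is to collapse this with the following tools:
\begin{enumerate}[label=(\roman*)]
\item Since \(x\vdash(y\vdash z)\) only sees \(y\) "through \(\vdash\)", the identity \((x\dashv y)\vdash z=(x\vdash y)\vdash z\) lets us replace every \(\dashv\) sitting to the left of a \(\vdash\) by \(\vdash\).
\item Dually, \(\dashv\) to the right of a \(\dashv\) becomes \(\vdash\).
\item We need \(S(x\dashv y)\) and \(S(x\vdash y)\) in terms of \(S(y),S(x)\). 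I would establish this as a preliminary lemma, an anti-multiplicativity up to the dialgebra identities, proved in the standard convolution way: both sides are convolution inverses of the product map in a suitable sense.
\end{enumerate}
After these reductions, adjacent pairs \(S(a_{(2)})\vdash a_{(3)}\) and \(a_{(4)}\vdash S(a_{(5)})\) appear. We then move them to contract to \(\varepsilon\,\xi\) using the antipode axioms and bar-unit properties, relabelling freely by cocommutativity. This should leave exactly the left side. The delicate points are the antipode lemma in (iii) and tracking which product survives at each position. If this becomes unwieldy, a fallback is to cite \cite[Proposition~2.2/Theorem]{ABRW1}, where this adjoint construction is established.
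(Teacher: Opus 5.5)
\paragraph{Overall.} The paper does no computation here: its proof is the citation \cite[Proposition~2.6]{ABRW1}. (Proposition~2.2 is what the paper cites for the Leibniz structure on primitives.) So your fallback is exactly the paper's argument. Your Steps 1 and 2 are correct direct checks; note also that \(S(\xi)=S(\xi)\dashv\xi=\xi\) follows at once from the left antipode identity. Step 3, however, is only a plan, and two of its ingredients do not work as described.

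\paragraph{The convolution proof of lemma (iii).} The ``standard convolution'' argument is not available on \(A\) itself. There \(\varepsilon\xi\) is only a left unit for \(\vdash\)-convolution and only a right unit for \(\dashv\)-convolution. For instance, for \(g=(a,s)\) in Example~\ref{ex:C2-digroup} one has \(g\dashv S(g)=(a,1)\neq\xi\). So ``two convolution inverses coincide'' cannot be invoked directly. The missing observation is that \(S\) lands in \(H:=\xi\dashv A\):
\[
S(x)=\sum S(x_{(1)})\dashv\bigl(x_{(2)}\vdash S(x_{(3)})\bigr)=\sum\bigl(S(x_{(1)})\dashv x_{(2)}\bigr)\dashv S(x_{(3)})=\xi\dashv S(x),
\]
using the second mixed identity and associativity of \(\dashv\). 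On \(H\) the two products coincide and \(\xi\) is a two-sided unit. Moreover, \(x\mapsto\xi\dashv x\) is multiplicative for both products. Running the convolution argument in \(\operatorname{Hom}(A\otimes A,H)\) then gives \(S(x\vdash y)=S(x\dashv y)=S(y)\dashv S(x)\).

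\paragraph{The contraction step.} This step misreads the antipode axioms. Pairs \(S(a_{(i)})\vdash a_{(j)}\) are not \(\varepsilon\xi\): in the same example, \(S(g)\vdash g=(b,1)\neq\xi\). Also \(S^2\neq\id\) (here \(S^2(g)=(e,s)\); in general \(S^2(x)=\xi\dashv x\)). Consequently the factor \(S\bigl((a_{(3)}\vdash b_{(2)})\dashv S(a_{(4)})\bigr)\) produces \(S(a_{(i)})\dashv S^2(a_{(j)})\), not \(a_{(i)}\vdash S(a_{(j)})\).

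The contractions become possible only after applying the mixed identities in the direction opposite to your rules (i)--(ii):
\[
(S(a_{(i)})\vdash a_{(j)})\vdash c=(S(a_{(i)})\dashv a_{(j)})\vdash c,
\qquad
X\dashv(y_{(1)}\dashv S(y_{(2)}))=X\dashv(y_{(1)}\vdash S(y_{(2)})),
\]
the second with \(y=S(a)\).

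\paragraph{A cleaner finish.} The lemma and the third mixed identity give \(a\triangleright(b\triangleright c)=(a\vdash b)\triangleright c\). The lemma together with \(S(\xi)=\xi\) gives \(w\triangleright c=(\xi\dashv w)\triangleright c\). Hence the right-hand side equals \(\sum\bigl((a_{(1)}\triangleright b)\vdash a_{(2)}\bigr)\triangleright c\). Replacing its first argument by its image under \(\xi\dashv\) yields \(\sum\xi\dashv(a_{(1)}\vdash b)\dashv\bigl(S(a_{(2)})\dashv a_{(3)}\bigr)=\xi\dashv(a\vdash b)\). So the right-hand side is \((a\vdash b)\triangleright c\), which is the left-hand side.
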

	
	\begin{proof}
		This is \cite[Proposition~2.6]{ABRW1}, written in the present notation.
	\end{proof}
	
	For a rack bialgebra \(B\), define its set of set-like elements by
	\[
	\Slike(B):=\{b\in B\mid \Delta(b)=b\otimes b,\ \varepsilon(b)=1\}.
	\]
	Since the rack product is a coalgebra morphism, \(\Slike(B)\) is closed under
	\(\triangleright\).
	
	In general, closure of set-like elements under the rack product does not by
	itself guarantee that the induced left translations are bijective. In this
	paper we only use this construction for adjoint rack bialgebras arising from
	cocommutative Hopf dialgebras. In that case, the antipode provides the inverse
	of the corresponding left translations on group-like elements, and the set-like
	elements form a genuine rack.
	
	Thus, for a cocommutative Hopf dialgebra \(A\), we define
	\[
	\mathcal R(A)
	:=
	\Slike(\Adj(A)).
	\]
	On morphisms, \(\mathcal R\) is obtained by restricting the corresponding
	coalgebra morphism to set-like elements. Hence we obtain a functor
	\[
	\mathcal R:
	\mathbf{HopfDialg}
	\longrightarrow
	\mathbf{Rack}.
	\]
	The main point of Theorem~\ref{thm:factorization-through-glike} is that this
	functor factorizes through the digroup of group-like elements.
	
	\subsection{Generalized digroups and conjugation racks}
	
	We now recall the structural decomposition of generalized digroups and the
	associated conjugation rack construction. The basic definitions and categorical
	conventions were fixed in Section~\ref{sec:preliminaries}. The main point is
	that, after choosing a bar-unit, every generalized digroup is controlled by an
	ordinary group acting on its halo.
	
	The following structure theorem is the main structural tool for generalized
	digroups. It shows that, after choosing a bar-unit, every generalized digroup
	is controlled by an ordinary group acting on its halo.
	
	\begin{theorem}\label{thm:gdigroup-structure}
		Let \(D\) be a generalized digroup and fix a bar-unit
		\[
		\xi\in\Halo(D).
		\]
		Let \(G_l^\xi\) denote the left group associated with \(\xi\). Then:
		\begin{enumerate}
			\item \(G_l^\xi\) is a group with respect to \(\dashv\), with identity
			\(\xi\);
			
			\item the halo \(\Halo(D)\) is a \(G_l^\xi\)-set for the action
			\[
			a\bullet_l \eta
			:=
			a\vdash \eta\dashv a^{-1}
			\qquad
			(a\in G_l^\xi,\ \eta\in\Halo(D)),
			\]
			where \(a^{-1}\) denotes the inverse of \(a\) in the group
			\(G_l^\xi\);
			
			\item writing the group law of \(G_l^\xi\) multiplicatively, the set
			\[
			G_l^\xi\times \Halo(D)
			\]
			becomes a generalized digroup with products
			\[
			(a,\alpha)\vdash(b,\beta)
			=
			\bigl(ab,a\bullet_l\beta\bigr),
			\qquad
			(a,\alpha)\dashv(b,\beta)
			=
			\bigl(ab,\alpha\bigr);
			\]
			
			\item the map
			\[
			\varphi_l:D\longrightarrow G_l^\xi\times\Halo(D),
			\qquad
			\varphi_l(x)
			=
			\bigl(\xi\dashv x,\ x\dashv x^{-1}_{l,\xi}\bigr)
			\]
			is an isomorphism of generalized digroups.
		\end{enumerate}
	\end{theorem}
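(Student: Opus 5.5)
I will take \(G_l^\xi:=\{\xi\dashv x\mid x\in D\}\) as the left group; if the paper's definition differs, I expect it to agree with this set. The whole proof rests on two elementary reduction identities, which I will establish first from the mixed axioms and the bar-unit property of \(\xi\):
\[
x\vdash a=(\xi\dashv x)\vdash a,\qquad a\dashv x=a\dashv(\xi\dashv x).
\]
The first holds because \((\xi\dashv x)\vdash a=(\xi\vdash x)\vdash a=x\vdash a\). The second holds because \(a\dashv(\xi\dashv x)=a\dashv(\xi\vdash x)=a\dashv x\).

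Together these say that, in both products, the "inner" factor only matters through its image \(\pi(x):=\xi\dashv x\). Moreover \(\pi\) is a \(\dashv\)-homomorphism: \(\pi(x\dashv y)=(\xi\dashv x)\dashv(\xi\dashv y)\) by associativity and the second identity.

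\textbf{Step 1 (group structure).} \(G_l^\xi\) is closed under \(\dashv\), and \(\xi\) is a two-sided identity on it: \((\xi\dashv x)\dashv\xi=\xi\dashv x\) and \(\xi\dashv(\xi\dashv x)=\xi\dashv x\). Next I show that \(\pi(x^{-1}_{l,\xi})\) is a left inverse of \(\pi(x)\), using \(x^{-1}_{l,\xi}\dashv x=\xi\) and the reduction identities. An associative monoid in which every element has a left inverse is a group, so part (1) follows.

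\textbf{Step 2 (halo and action).} The main obstacle is showing that \(e_x:=x\dashv x^{-1}_{l,\xi}\) is a bar-unit, and similarly that \(a\vdash\eta\dashv a^{-1}\in\Halo(D)\). For \(e_x\vdash c\), I rewrite it as \((x\vdash x^{-1}_{l,\xi})\vdash c=x\vdash(x^{-1}_{l,\xi}\vdash c)\). Passing through \(\pi\), this becomes \(\bigl(\pi(x)\dashv\pi(x^{-1}_{l,\xi})\bigr)\vdash c\). Since inverses in the group \(G_l^\xi\) are two-sided, this equals \(\xi\vdash c=c\). For \(c\dashv e_x\), the second reduction identity gives \(c\dashv\bigl(\pi(x)\dashv\pi(x^{-1}_{l,\xi})\bigr)=c\dashv\xi=c\). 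The same manipulation, with \(\eta\) in the middle, shows that \(a\bullet_l\eta\) is a bar-unit. The action axioms \(\xi\bullet_l\eta=\eta\) and \((ab)\bullet_l\eta=a\bullet_l(b\bullet_l\eta)\) then follow from associativity and the mixed identity \((x\vdash y)\dashv z=x\vdash(y\dashv z)\). This gives part (2).

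\textbf{Step 3 (model and isomorphism).} Part (3) is a direct check of associativity and the mixed identities on \(G_l^\xi\times\Halo(D)\), using only the group law and the action. Bar-units of the model are the pairs \((\xi,\alpha)\), and inverses are \((a^{-1},\cdot)\) with the appropriate halo component.

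For part (4), I construct the inverse \(\psi(g,\alpha):=\alpha\dashv g\). The identity \(\psi\varphi_l=\id\) follows from
\[
x=x\dashv\xi=x\dashv(x^{-1}_{l,\xi}\dashv x)=e_x\dashv x=e_x\dashv\pi(x).
\]
For \(\varphi_l\psi=\id\), I compute \(\xi\dashv(\alpha\dashv g)=g\) using that \(\alpha\) is a bar-unit, and I identify the halo component via Step 2.

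Finally, I verify that \(\varphi_l\) preserves both products. For \(\dashv\): \(\pi\) is multiplicative, and \(e_{x\dashv y}=e_x\) by the reduction identities. For \(\vdash\): \(\pi(x\vdash y)=\pi(x)\dashv\pi(y)\), since \(\xi\dashv(x\vdash y)=(\xi\dashv x)\vdash y\) reduces to the group product. The halo part of \(x\vdash y\) equals \(\pi(x)\vdash e_y\dashv\pi(x)^{-1}\), which I expect to be the most computation-heavy check, done with the identities above.
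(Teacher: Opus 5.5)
The paper does not prove this theorem itself; it refers to the decomposition theorem in the literature (Theorem~5 of the original paper on generalized digroups, and Theorem~2 of the later exposition). Your proposal is therefore a different, self-contained route. You derive everything from the two reduction identities \(x\vdash a=\pi(x)\vdash a\) and \(a\dashv x=a\dashv\pi(x)\), with \(\pi(x)=\xi\dashv x\). The following parts check out:
\begin{itemize}
\item both reduction identities, and the multiplicativity of \(\pi\) for \(\dashv\);
\item the group structure of \(G_l^\xi\);
\item the bar-unit property of \(e_x\) and of \(a\bullet_l\eta\); both are instances of the fact that \(y\) is a bar-unit as soon as \(\pi(y)=\xi\), which your reduction identities give directly;
\item the model in part (3), the inverse \(\psi(g,\alpha)=\alpha\dashv g\), and compatibility with \(\dashv\).
\end{itemize}
What this buys over the citation is a complete verification from the axioms. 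However, one step fails as written.

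To obtain \(\pi(x\vdash y)=\pi(x)\dashv\pi(y)\) you invoke \(\xi\dashv(x\vdash y)=(\xi\dashv x)\vdash y\). This identity is false in general. Take \(x=\xi\) and \(y=\eta\in\Halo(D)\setminus\{\xi\}\). The left side is \(\xi\dashv\eta=\xi\), while the right side is \(\xi\vdash\eta=\eta\). So it breaks down as soon as the halo has more than one element, which is exactly the case of interest.

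The conclusion is still true; the correct justification is the second mixed axiom: \(\xi\dashv(x\vdash y)=\xi\dashv(x\dashv y)=\pi(x)\dashv\pi(y)\). With this corrected, the halo check you postponed is short. First note that \(e_z=z\dashv\pi(z)^{-1}\) by the second reduction identity. This also shows that \(\varphi_l\) does not depend on the choice of \(x^{-1}_{l,\xi}\), which you should state explicitly. Then the mixed identity \(a\vdash(b\dashv c)=(a\vdash b)\dashv c\), the first reduction identity and the corrected formula for \(\pi(x\vdash y)\) give
\[
\pi(x)\vdash e_y\dashv\pi(x)^{-1}=(x\vdash y)\dashv\pi(y)^{-1}\dashv\pi(x)^{-1}=(x\vdash y)\dashv\pi(x\vdash y)^{-1}=e_{x\vdash y}.
\]

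A smaller omission concerns the action law \((ab)\bullet_l\eta=a\bullet_l(b\bullet_l\eta)\). Besides associativity and \(x\vdash(y\dashv z)=(x\vdash y)\dashv z\), it also needs \((a\dashv b)\vdash\eta=(a\vdash b)\vdash\eta\). That is the third mixed axiom, or equivalently your first reduction identity.
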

	
	\begin{proof}
		This is the decomposition theorem for generalized digroups. We refer to
		\cite[Theorem~5]{SVW} and to the exposition in \cite[Theorem~2]{RSV4}.
	\end{proof}
	
	\begin{notation}\label{not:g-cross-e}
		After choosing a bar-unit \(\xi\), we shall often use the structural
		identification
		\[
		D\simeq G\times E,
		\]
		where
		\[
		G:=G_l^\xi,
		\qquad
		E:=\Halo(D).
		\]
		We write the action simply as
		\[
		g\bullet\alpha
		\qquad
		(g\in G,\ \alpha\in E),
		\]
		and the two products on \(G\times E\) as
		\[
		(g,\alpha)\vdash(h,\beta)
		=
		(gh,g\bullet\beta),
		\qquad
		(g,\alpha)\dashv(h,\beta)
		=
		(gh,\alpha).
		\]
		The bar-units in this model are precisely the elements of the form
		\[
		(e,\alpha),
		\qquad
		\alpha\in E,
		\]
		where \(e\) is the identity of \(G\).
	\end{notation}
	
	\begin{remark}\label{rem:choice-of-bar-unit}
		The structural presentation \(D\simeq G\times E\) depends on the chosen
		bar-unit \(\xi\). However, the conjugation rack \(\Conj(D)\) is intrinsic:
		different choices of bar-unit give isomorphic descriptions of the same rack.
		In what follows, the chosen presentation is used only as a coordinate model
		for computing the rack operation and its finite combinatorial invariants.
	\end{remark}
	
	We now pass from generalized digroups to racks. The essential point is that
	the conjugation formula is independent of the one-sided inverse chosen.
	
	\begin{proposition}[Conjugation rack of a generalized digroup]\label{prop:conj-rack-gdigroup}
		Let \(D\) be a generalized digroup and fix a bar-unit
		\(\xi\in\Halo(D)\). For \(x,y\in D\), define
		\[
		x\triangleright y
		:=
		(x\vdash y)\dashv x^{-1},
		\]
		where \(x^{-1}\) denotes any one-sided inverse of \(x\) relative to the chosen
		bar-unit, either a right inverse for \(\vdash\) or a left inverse for \(\dashv\).
		Then this operation is independent of the chosen inverse and defines a rack
		structure on \(D\). We denote this rack by
		\[
		\Conj(D).
		\]
		Moreover, the assignment \(D\mapsto \Conj(D)\) is functorial:
		every homomorphism of generalized digroups
		\[
		f:D\longrightarrow D'
		\]
		induces a rack homomorphism
		\[
		\Conj(f):\Conj(D)\longrightarrow\Conj(D').
		\]
		Thus there is a functor
		\[
		\Conj:\mathbf{gDig}\longrightarrow\mathbf{Rack}.
		\]
	\end{proposition}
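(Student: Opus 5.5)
The plan is to work entirely in the coordinate model of Theorem~\ref{thm:gdigroup-structure}. Since \(\varphi_l\) is an isomorphism of generalized digroups, it carries one-sided inverses to one-sided inverses, so it suffices to prove everything for \(D=G\times E\) with the products of Notation~\ref{not:g-cross-e}. I fix the chosen bar-unit \(\xi=(e,\xi_0)\) and describe all one-sided inverses of \(x=(g,\alpha)\).

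\textbf{One-sided inverses.} For a right inverse \(y=(h,\beta)\) we need \(x\vdash y=(gh,g\bullet\beta)=(e,\xi_0)\). This forces \(h=g^{-1}\) and \(\beta=g^{-1}\bullet\xi_0\). For a left inverse we need \(y\dashv x=(hg,\beta)=(e,\xi_0)\). This forces \(h=g^{-1}\) and \(\beta=\xi_0\).

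\textbf{Independence of the inverse and the explicit formula.} In either case the \(G\)-coordinate of the inverse is \(g^{-1}\). Now \(\dashv\) only sees the \(G\)-coordinate of its right factor:
\[
(x\vdash y)\dashv(g^{-1},\gamma)=(ghg^{-1},g\bullet\beta)
\]
for any \(\gamma\), where \(y=(h,\beta)\). This proves independence of the chosen inverse and yields the formula
\[
(g,\alpha)\triangleright(h,\beta)=(ghg^{-1},g\bullet\beta)
\]
quoted in the introduction.

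\textbf{Rack axioms.} From the formula, \(L_{(g,\alpha)}=c_g\times\rho(g)\), a product of two bijections, so it is bijective with inverse \(c_{g^{-1}}\times\rho(g^{-1})\). For self-distributivity, take \(x=(g,\alpha)\), \(y=(h,\beta)\), \(z=(k,\gamma)\). Then
\[
x\triangleright(y\triangleright z)=(ghkh^{-1}g^{-1},\,gh\bullet\gamma).
\]
On the other side, \(x\triangleright y=(ghg^{-1},g\bullet\beta)\) and \(x\triangleright z=(gkg^{-1},g\bullet\gamma)\), so
\[
(x\triangleright y)\triangleright(x\triangleright z)=\bigl(ghg^{-1}gkg^{-1}gh^{-1}g^{-1},\,ghg^{-1}g\bullet\gamma\bigr).
\]
The two sides agree because conjugation is a group automorphism and \(\bullet\) is an action.

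\textbf{Functoriality.} Let \(f:D\to D'\) preserve both products and bar-units. Choose a left inverse \(x^{-1}\) of \(x\) relative to \(\xi\), so \(x^{-1}\dashv x=\xi\). Then \(f(x^{-1})\dashv f(x)=f(\xi)\), so \(f(x^{-1})\) is a left inverse of \(f(x)\) relative to the bar-unit \(f(\xi)\). Hence
\[
f(x\triangleright y)=(f(x)\vdash f(y))\dashv f(x^{-1})
\]
is the conjugation in \(D'\) computed with respect to the bar-unit \(f(\xi)\).

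\textbf{Main obstacle: dependence on the bar-unit.} For functoriality one also needs the operation to be independent of the chosen bar-unit, not just of the chosen inverse, because \(f(\xi)\) need not be the bar-unit fixed in \(D'\). I would settle this directly in \(D'\simeq G'\times E'\). Every bar-unit has the form \((e,\eta)\), and by the computation above, any one-sided inverse relative to any bar-unit still has \(G\)-coordinate \(g^{-1}\). The displayed formula for \(\triangleright\) therefore involves neither \(\xi_0\) nor \(\eta\), which also justifies Remark~\ref{rem:choice-of-bar-unit}. Compatibility with identities and composition is immediate, since \(\Conj(f)=f\) as a map of sets.
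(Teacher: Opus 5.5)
Your proof is correct and follows the same route as the paper. You pass to the model $G\times E$ of Theorem~\ref{thm:gdigroup-structure}, compute the one-sided inverses to obtain $(g,\alpha)\triangleright(h,\beta)=(ghg^{-1},g\bullet\beta)$, verify bijectivity and self-distributivity directly, and get functoriality because $f$ sends one-sided inverses relative to $\xi$ to one-sided inverses relative to $f(\xi)$. Your version is somewhat more careful than the paper's in two places: you show that every one-sided inverse must have $G$-coordinate $g^{-1}$, whereas the paper only exhibits one inverse of each kind, and you state explicitly that the operation does not depend on the bar-unit, which the paper's functoriality step uses tacitly when $f(\xi)$ is not the bar-unit fixed in $D'$.
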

	
	\begin{proof}
		Using Theorem~\ref{thm:gdigroup-structure}, identify
		\[
		D\simeq G\times E
		\]
		as in Notation~\ref{not:g-cross-e}. Thus
		\[
		(g,\alpha)\vdash(h,\beta)
		=
		(gh,g\bullet\beta),
		\qquad
		(g,\alpha)\dashv(h,\beta)
		=
		(gh,\alpha).
		\]
		
		Fix a bar-unit \((e,\eta)\). A right inverse of \((g,\alpha)\) relative to
		\((e,\eta)\) is
		\[
		(g^{-1},g^{-1}\bullet\eta),
		\]
		because
		\[
		(g,\alpha)\vdash(g^{-1},g^{-1}\bullet\eta)
		=
		(e,g\bullet(g^{-1}\bullet\eta))
		=
		(e,\eta).
		\]
		A left inverse of \((g,\alpha)\) relative to \((e,\eta)\) is
		\[
		(g^{-1},\eta),
		\]
		because
		\[
		(g^{-1},\eta)\dashv(g,\alpha)
		=
		(e,\eta).
		\]
		
		Now let \((h,\beta)\in G\times E\). If one uses the right inverse, then
		\begin{align*}
			\bigl((g,\alpha)\vdash(h,\beta)\bigr)
			\dashv
			(g^{-1},g^{-1}\bullet\eta)
			&=
			(gh,g\bullet\beta)\dashv(g^{-1},g^{-1}\bullet\eta)\\
			&=
			(ghg^{-1},g\bullet\beta).
		\end{align*}
		If one uses the left inverse, then
		\begin{align*}
			\bigl((g,\alpha)\vdash(h,\beta)\bigr)
			\dashv
			(g^{-1},\eta)
			&=
			(gh,g\bullet\beta)\dashv(g^{-1},\eta)\\
			&=
			(ghg^{-1},g\bullet\beta).
		\end{align*}
		Hence the conjugation operation is independent of the chosen one-sided
		inverse. In the model \(G\times E\), it is given by the explicit formula
		\begin{equation}\label{eq:conj-rack-formula}
			(g,\alpha)\triangleright(h,\beta)
			=
			(ghg^{-1},g\bullet\beta).
		\end{equation}
		
		We now verify the rack axioms. For fixed \((g,\alpha)\), the left
		translation is
		\[
		L_{(g,\alpha)}(h,\beta)
		=
		(ghg^{-1},g\bullet\beta).
		\]
		This map is bijective, with inverse
		\[
		(h,\beta)
		\longmapsto
		(g^{-1}hg,g^{-1}\bullet\beta).
		\]
		Thus all left translations are bijections.
		
		It remains to prove self-distributivity. Let
		\[
		(g,\alpha),(h,\beta),(k,\gamma)\in G\times E.
		\]
		Using \eqref{eq:conj-rack-formula}, we compute
		\begin{align*}
			(g,\alpha)\triangleright
			\bigl((h,\beta)\triangleright(k,\gamma)\bigr)
			&=
			(g,\alpha)\triangleright(hkh^{-1},h\bullet\gamma)\\
			&=
			(ghkh^{-1}g^{-1},g\bullet(h\bullet\gamma)).
		\end{align*}
		On the other hand,
		\begin{align*}
			\bigl((g,\alpha)\triangleright(h,\beta)\bigr)
			\triangleright
			\bigl((g,\alpha)\triangleright(k,\gamma)\bigr)
			&=
			(ghg^{-1},g\bullet\beta)
			\triangleright
			(gkg^{-1},g\bullet\gamma)\\
			&=
			\bigl((ghg^{-1})(gkg^{-1})(ghg^{-1})^{-1},
			(ghg^{-1})\bullet(g\bullet\gamma)\bigr)\\
			&=
			(ghkh^{-1}g^{-1},g\bullet(h\bullet\gamma)).
		\end{align*}
		Therefore
		\[
		(g,\alpha)\triangleright
		\bigl((h,\beta)\triangleright(k,\gamma)\bigr)
		=
		\bigl((g,\alpha)\triangleright(h,\beta)\bigr)
		\triangleright
		\bigl((g,\alpha)\triangleright(k,\gamma)\bigr).
		\]
		Hence \(\Conj(D)\) is a rack.
		
		Finally, let \(f:D\to D'\) be a morphism of generalized digroups in the above
		sense. Thus \(f\) preserves both products and sends bar-units to bar-units.
		If \(x^{-1}\) is a one-sided inverse of \(x\) relative to a bar-unit \(\xi\),
		then \(f(x^{-1})\) is the corresponding one-sided inverse of \(f(x)\) relative
		to the bar-unit \(f(\xi)\). Thus
		\[
		f(x\triangleright y)
		=
		f\bigl((x\vdash y)\dashv x^{-1}\bigr)
		=
		(f(x)\vdash f(y))\dashv f(x^{-1})
		=
		f(x)\triangleright f(y).
		\]
		Therefore \(f\) is a rack homomorphism between the associated conjugation
		racks. Identities and compositions are inherited from generalized digroup
		homomorphisms, so \(D\mapsto\Conj(D)\) defines a functor.
	\end{proof}
	
	\begin{corollary}[Quandle criterion]\label{cor:quandle-criterion}
		Let \(D\) be a generalized digroup and choose a structural identification
		\[
		D\simeq G\times E
		\]
		as above. Then \(\Conj(D)\) is a quandle if and only if the action of
		\(G\) on \(E\) is trivial.
	\end{corollary}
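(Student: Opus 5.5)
We argue directly in the coordinate model of Notation~\ref{not:g-cross-e}, using the explicit formula \eqref{eq:conj-rack-formula} from Proposition~\ref{prop:conj-rack-gdigroup}. Since \(\Conj(D)\) is defined intrinsically (Remark~\ref{rem:choice-of-bar-unit}) and \(\varphi_l\) of Theorem~\ref{thm:gdigroup-structure} is an isomorphism of generalized digroups, functoriality of \(\Conj\) makes \(\Conj(D)\) isomorphic as a rack to the conjugation rack on \(G\times E\). Being a quandle is preserved under rack isomorphism, so it suffices to test idempotency in the model.

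For \((g,\alpha)\in G\times E\), the formula gives
\[
(g,\alpha)\triangleright(g,\alpha)=(ggg^{-1},g\bullet\alpha)=(g,g\bullet\alpha).
\]
Hence \((g,\alpha)\triangleright(g,\alpha)=(g,\alpha)\) holds if and only if \(g\bullet\alpha=\alpha\). Therefore \(\Conj(D)\) is a quandle if and only if \(g\bullet\alpha=\alpha\) for all \(g\in G\) and \(\alpha\in E\), which is exactly triviality of the action. Both directions follow from this single computation: for the forward direction, apply idempotency to an arbitrary pair \((g,\alpha)\); for the converse, substitute \(g\bullet\alpha=\alpha\).

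The only point that needs care is the reduction to the model. We must make sure that the quandle property is intrinsic, so that it does not depend on the chosen bar-unit \(\xi\) and hence on the particular pair \((G,E)\). This follows from Remark~\ref{rem:choice-of-bar-unit} together with the fact that rack isomorphisms preserve idempotency. We also note that triviality of the action is independent of the chosen identification: any two such identifications induce isomorphic \(G\)-sets up to an isomorphism of groups, so a trivial action in one presentation is trivial in every other. Beyond this, the argument is a one-line computation.
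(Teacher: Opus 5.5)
Your proposal is correct and matches the paper's proof: both rest on the single computation \((g,\alpha)\triangleright(g,\alpha)=(g,g\bullet\alpha)\) from \eqref{eq:conj-rack-formula}, so idempotency for every \((g,\alpha)\in G\times E\) is exactly triviality of the action. Your extra remarks, transporting the quandle property along the rack isomorphism induced by \(\varphi_l\) and noting independence of the chosen bar-unit, are sound justifications that the paper leaves implicit.
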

	
	\begin{proof}
		By \eqref{eq:conj-rack-formula},
		\[
		(g,\alpha)\triangleright(g,\alpha)
		=
		(ggg^{-1},g\bullet\alpha)
		=
		(g,g\bullet\alpha).
		\]
		Thus \(\Conj(D)\) is idempotent if and only if
		\[
		g\bullet\alpha=\alpha
		\qquad
		(\forall g\in G,\ \forall \alpha\in E).
		\]
		This is precisely the condition that the action of \(G\) on \(E\) be
		trivial.
	\end{proof}
	
	\begin{remark}\label{rem:conj-rack-base-fiber}
		Formula \eqref{eq:conj-rack-formula} shows that the conjugation rack of a
		generalized digroup is a fibered version of the ordinary conjugation rack
		of \(G\). The first coordinate is transformed by group conjugation, while
		the second coordinate is transformed by the given action of \(G\) on the
		halo \(E\).
	\end{remark}
	
	Formula~\eqref{eq:conj-rack-formula} reduces the conjugation rack of a
	generalized digroup to a group action on its halo. We now exploit this
	description in the finite case to extract explicit combinatorial invariants.
	
	\section{Finite rack combinatorics of generalized digroups}
	
	We now isolate the finite combinatorial structure carried by the conjugation
	rack of a generalized digroup. Throughout this section, \(D\) denotes a
	finite generalized digroup. After choosing a bar-unit
	\[
	\xi\in\Halo(D)
	\]
	and applying Theorem~\ref{thm:gdigroup-structure}, we identify
	\[
	D\simeq G\times E,
	\]
	where
	\[
	G:=G_l^\xi
	\qquad\text{and}\qquad
	E:=\Halo(D).
	\]
	Thus \(G\) is a finite group and \(E\) is a finite \(G\)-set. We write
	\[
	\rho:G\longrightarrow \operatorname{Sym}(E),
	\qquad
	\rho(g)(\beta)=g\bullet\beta,
	\]
	for the action of \(G\) on \(E\), and
	\[
	c_g:G\longrightarrow G,
	\qquad
	c_g(h)=ghg^{-1},
	\]
	for the conjugation permutation of \(G\).
	
	The first result records the structural form of the conjugation rack and its
	inner permutation group.
	
	\begin{theorem}[Structure theorem for the conjugation rack]\label{thm:conj-structure}
		Let \(D\) be a finite generalized digroup and identify
		\[
		D\simeq G\times E
		\]
		as above. Then the conjugation rack of \(D\) is the rack on \(G\times E\)
		given by
		\[
		(g,\alpha)\triangleright(h,\beta)
		=
		(ghg^{-1},g\bullet\beta).
		\]
		Moreover:
		\begin{enumerate}
			\item the projection
			\[
			\pi:\Conj(D)\longrightarrow\Conj(G),
			\qquad
			\pi(g,\alpha)=g,
			\]
			is a surjective rack morphism;
			
			\item for every \((g,\alpha)\in G\times E\), the left translation is
			\[
			L_{(g,\alpha)}=c_g\times \rho(g),
			\]
			hence it depends only on the first coordinate \(g\);
			
			\item the inner permutation group of \(\Conj(D)\) is
			\[
			\operatorname{Inn}(\Conj(D))
			\cong
			G/\bigl(Z(G)\cap \operatorname{Ker}(\rho)\bigr).
			\]
		\end{enumerate}
		In particular, \(\Conj(D)\) is a rack over the ordinary conjugation rack
		\(\Conj(G)\), with fiber \(E\).
	\end{theorem}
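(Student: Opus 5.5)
The formula for the rack operation is exactly \eqref{eq:conj-rack-formula} from Proposition~\ref{prop:conj-rack-gdigroup}. I would transport it along the isomorphism \(\varphi_l\) of Theorem~\ref{thm:gdigroup-structure}. This is legitimate because \(\varphi_l\) is a generalized digroup isomorphism and \(\Conj\) is a functor, so \(\Conj(D)\cong\Conj(G\times E)\). The remaining items are then checked directly in the model \(G\times E\).

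For item (1), \(\pi\) is surjective because \(E\) is nonempty, since a bar-unit exists. The morphism property is read off the first coordinate:
\[
\pi\bigl((g,\alpha)\triangleright(h,\beta)\bigr)=ghg^{-1}=g\triangleright h \quad\text{in }\Conj(G).
\]
For item (2), the same formula gives \(L_{(g,\alpha)}(h,\beta)=(c_g(h),\rho(g)(\beta))\), and \(\alpha\) does not appear. This already shows that the fiber over \(g\) acts by a single permutation, which justifies the closing sentence that \(\Conj(D)\) is a rack over \(\Conj(G)\) with fiber \(E\).

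For item (3), I would define
\[
\Phi:G\to\operatorname{Sym}(G\times E),\qquad \Phi(g)=c_g\times\rho(g).
\]
Both \(g\mapsto c_g\) and \(\rho\) are homomorphisms, so \(\Phi\) is a group homomorphism. By item (2), the image of \(\Phi\) is exactly the set of left translations. The inner group \(\operatorname{Inn}(\Conj(D))\) is generated by these left translations, and since they form the image of a group homomorphism they are already a subgroup. Hence \(\operatorname{Inn}(\Conj(D))=\Phi(G)\cong G/\operatorname{Ker}\Phi\).

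The kernel is computed by evaluating \(\Phi(g)\) on all pairs. We have \(\Phi(g)=\id\) exactly when \(c_g=\id_G\) and \(\rho(g)=\id_E\), because \(G\) and \(E\) are both nonempty, so each factor can be tested separately. The first condition means \(g\in Z(G)\) and the second means \(g\in\operatorname{Ker}\rho\). Therefore \(\operatorname{Ker}\Phi=Z(G)\cap\operatorname{Ker}(\rho)\).

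The only step needing care is item (3). One must confirm that the set of left translations is closed under composition and inverses, which follows from \(\Phi\) being a homomorphism. One must also note that a product permutation \(c_g\times\rho(g)\) is the identity only when both factors are, which uses that both \(G\) and \(E\) are nonempty. Finiteness is not actually needed for any of these arguments.
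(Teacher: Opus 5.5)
Your proposal is correct and follows the paper's proof essentially step for step: you read the formula off Proposition~\ref{prop:conj-rack-gdigroup}, check \(\pi\) and \(L_{(g,\alpha)}\) coordinatewise, and identify the inner group as the image of the homomorphism \(g\mapsto c_g\times\rho(g)\), whose kernel is \(Z(G)\cap\operatorname{Ker}(\rho)\). Your extra remarks are valid refinements of points the paper leaves implicit: nonemptiness of \(E\) is used both for surjectivity and for the kernel computation, the image of a homomorphism is already a subgroup, and finiteness is not needed.
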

	
	\begin{proof}
		The formula
		\[
		(g,\alpha)\triangleright(h,\beta)
		=
		(ghg^{-1},g\bullet\beta)
		\]
		was obtained in Proposition~\ref{prop:conj-rack-gdigroup}. Therefore
		\[
		\pi\bigl((g,\alpha)\triangleright(h,\beta)\bigr)
		=
		ghg^{-1}
		=
		\pi(g,\alpha)\triangleright \pi(h,\beta),
		\]
		so \(\pi\) is a rack morphism. It is clearly surjective.
		
		For the second assertion, the left translation by \((g,\alpha)\) is
		\[
		L_{(g,\alpha)}(h,\beta)
		=
		(g,\alpha)\triangleright(h,\beta)
		=
		(ghg^{-1},g\bullet\beta).
		\]
		This is exactly the product permutation
		\[
		c_g\times\rho(g)
		\]
		on \(G\times E\). Hence \(L_{(g,\alpha)}\) depends only on \(g\).
		
		Define
		\[
		\lambda:G\longrightarrow \operatorname{Inn}(\Conj(D)),
		\qquad
		\lambda(g)=c_g\times\rho(g).
		\]
		Equivalently, \(\lambda(g)=L_{(g,\alpha)}\) for any choice of
		\(\alpha\in E\). Since
		\[
		c_g\circ c_h=c_{gh}
		\qquad\text{and}\qquad
		\rho(g)\circ\rho(h)=\rho(gh),
		\]
		we have
		\[
		\lambda(g)\lambda(h)=\lambda(gh),
		\]
		so \(\lambda\) is a group homomorphism. Its image is the whole inner group,
		because \(\operatorname{Inn}(\Conj(D))\) is generated by all left
		translations and each left translation has the form \(c_g\times\rho(g)\).
		
		Now \(g\in\operatorname{Ker}(\lambda)\) if and only if
		\[
		c_g(h)=h
		\qquad
		(\forall h\in G)
		\]
		and
		\[
		\rho(g)(\beta)=\beta
		\qquad
		(\forall \beta\in E).
		\]
		The first condition is equivalent to \(g\in Z(G)\), while the second is
		equivalent to \(g\in\operatorname{Ker}(\rho)\). Therefore
		\[
		\operatorname{Ker}(\lambda)
		=
		Z(G)\cap\operatorname{Ker}(\rho).
		\]
		The first isomorphism theorem gives
		\[
		\operatorname{Inn}(\Conj(D))
		\cong
		G/\bigl(Z(G)\cap\operatorname{Ker}(\rho)\bigr).
		\]
	\end{proof}
	
	We next introduce a cycle-index-type invariant for finite racks. The terminology
	is inspired by the classical cycle-index methods in enumerative combinatorics
	\cite{Read1968,HararyPalmer}. Since the average is taken over the left
	translations of the rack, rather than over a permutation group, we use the term
	\textbf{left-translation cycle index}.
	
	\begin{definition}\label{def:left-translation-cycle-index}
		Let \(X\) be a finite rack. For a permutation \(\sigma\) of \(X\), let
		\(c_\ell(\sigma)\) denote the number of cycles of length \(\ell\) in the
		cycle decomposition of \(\sigma\). The \textbf{left-translation cycle index}
		of \(X\) is
		\[
		Z_X^{\mathrm{LT}}(u_1,u_2,\ldots)
		:=
		\frac{1}{|X|}
		\sum_{x\in X}
		\prod_{\ell\geq 1}
		u_\ell^{\,c_\ell(L_x)}.
		\]
		Since \(X\) is finite, only finitely many exponents in each monomial are
		nonzero.
	\end{definition}
	
	\begin{theorem}[Left-translation cycle-index theorem]\label{thm:cycle-index}
		Let \(D\) be a finite generalized digroup and write
		\[
		D\simeq G\times E
		\]
		as above. For each \(g\in G\), let
		\[
		a_i(g):=c_i(c_g),
		\qquad
		b_j(g):=c_j(\rho(g)).
		\]
		Then, for every \((g,\alpha)\in G\times E\), the number of
		\(\ell\)-cycles of the left translation \(L_{(g,\alpha)}\) is
		\[
		c_\ell\bigl(L_{(g,\alpha)}\bigr)
		=
		\sum_{\substack{i,j\geq 1\\ \operatorname{lcm}(i,j)=\ell}}
		a_i(g)b_j(g)\gcd(i,j).
		\]
		Consequently,
		\[
		Z_{\Conj(D)}^{\mathrm{LT}}(u_1,u_2,\ldots)
		=
		\frac{1}{|G|}
		\sum_{g\in G}
		\prod_{\ell\geq 1}
		u_\ell^{
			\sum_{\substack{i,j\geq 1\\ \operatorname{lcm}(i,j)=\ell}}
			a_i(g)b_j(g)\gcd(i,j)
		}.
		\]
	\end{theorem}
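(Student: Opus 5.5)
By Theorem~\ref{thm:conj-structure}(2), every left translation is a product permutation:
\[
L_{(g,\alpha)}=c_g\times\rho(g)
\quad\text{on } G\times E.
\]
So the statement reduces to the standard fact about the cycle type of a product of two permutations acting coordinatewise on a Cartesian product. I will first prove a general lemma. Let \(\sigma\) be a permutation of a finite set \(X\) and \(\tau\) a permutation of a finite set \(Y\). Then \(\sigma\times\tau\) on \(X\times Y\) has
\[
c_\ell(\sigma\times\tau)=\sum_{\operatorname{lcm}(i,j)=\ell}c_i(\sigma)c_j(\tau)\gcd(i,j).
\]
Applying the lemma with \(\sigma=c_g\) and \(\tau=\rho(g)\) gives the first formula. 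Its right-hand side is independent of \(\alpha\).

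For the lemma, decompose \(X\) into the \(\sigma\)-cycles \(C\) and \(Y\) into the \(\tau\)-cycles \(C'\). Then \(X\times Y\) is the disjoint union of the \(\sigma\times\tau\)-invariant blocks \(C\times C'\). So it suffices to treat one block with \(|C|=i\) and \(|C'|=j\).

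On such a block, pick \(x\in C\) and \(y\in C'\). The point \((x,y)\) returns to itself under \((\sigma\times\tau)^n\) exactly when \(i\mid n\) and \(j\mid n\), that is, when \(\operatorname{lcm}(i,j)\mid n\). So every orbit in the block has length \(\operatorname{lcm}(i,j)\). Since the block has \(ij\) points, it splits into
\[
\frac{ij}{\operatorname{lcm}(i,j)}=\gcd(i,j)
\]
cycles of length \(\operatorname{lcm}(i,j)\).

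Summing over all pairs of cycles, there are \(c_i(\sigma)c_j(\tau)\) blocks of type \((i,j)\). Collecting those with \(\operatorname{lcm}(i,j)=\ell\) yields the lemma. The sum is finite because \(c_i(\sigma)=0\) for \(i>|X|\).

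For the cycle index, substitute into Definition~\ref{def:left-translation-cycle-index} with \(X=\Conj(D)\) and \(|X|=|G||E|\). The monomial attached to \((g,\alpha)\) depends only on \(g\), by Theorem~\ref{thm:conj-structure}(2). Hence the sum over \(G\times E\) equals \(|E|\) times the sum over \(G\), and the factor \(|E|\) cancels, giving the stated \(\frac{1}{|G|}\sum_{g\in G}\) expression.

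The only step with real content is the orbit-length count \(\operatorname{lcm}(i,j)\) on a single block. It rests on the observation that \((\sigma\times\tau)^n\) fixes \((x,y)\) if and only if \(\sigma^n\) fixes \(x\) and \(\tau^n\) fixes \(y\). I expect no genuine obstacle beyond stating this carefully.
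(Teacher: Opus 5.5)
Your proposal is correct and follows the paper's proof step for step. You use $L_{(g,\alpha)}=c_g\times\rho(g)$ from Theorem~\ref{thm:conj-structure}, split $G\times E$ into invariant blocks formed from an $i$-cycle and a $j$-cycle, show that every orbit in such a block has length $\operatorname{lcm}(i,j)$ and so there are $\gcd(i,j)$ of them, and then cancel the factor $|E|$ in the average. The only difference is cosmetic: you state the product-permutation count as a standalone lemma, whereas the paper carries it out inline.
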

	
	\begin{proof}
		By Theorem~\ref{thm:conj-structure}, for every \((g,\alpha)\in G\times E\),
		\[
		L_{(g,\alpha)}=c_g\times\rho(g).
		\]
		Fix \(g\in G\). Let \(C\) be an \(i\)-cycle of \(c_g\), and let \(F\) be a
		\(j\)-cycle of \(\rho(g)\). The product permutation
		\[
		c_g\times\rho(g)
		\]
		restricts to a permutation of \(C\times F\), which has \(ij\) elements.
		
		Starting from \((x,y)\in C\times F\), after \(m\) iterations one obtains
		\[
		(c_g\times\rho(g))^m(x,y)
		=
		(c_g^m(x),\rho(g)^m(y)).
		\]
		Hence \((x,y)\) returns to itself if and only if \(m\) is simultaneously a
		multiple of \(i\) and of \(j\). Therefore every orbit in \(C\times F\) has
		length
		\[
		\operatorname{lcm}(i,j).
		\]
		Since \(C\times F\) has \(ij\) elements, the number of such cycles is
		\[
		\frac{ij}{\operatorname{lcm}(i,j)}
		=
		\gcd(i,j).
		\]
		Thus each pair consisting of an \(i\)-cycle of \(c_g\) and a \(j\)-cycle of
		\(\rho(g)\) contributes exactly \(\gcd(i,j)\) cycles of length
		\(\operatorname{lcm}(i,j)\). Summing over all such cycles gives
		\[
		c_\ell\bigl(L_{(g,\alpha)}\bigr)
		=
		\sum_{\substack{i,j\geq 1\\ \operatorname{lcm}(i,j)=\ell}}
		a_i(g)b_j(g)\gcd(i,j).
		\]
		
		Now \(L_{(g,\alpha)}\) depends only on \(g\). For each fixed \(g\), there are
		exactly \(|E|\) elements of the form \((g,\alpha)\). Since
		\[
		|D|=|G||E|,
		\]
		the average in Definition~\ref{def:left-translation-cycle-index} becomes
		\[
		\frac{1}{|G||E|}
		\sum_{g\in G}
		|E|
		\prod_{\ell\geq 1}
		u_\ell^{
			\sum_{\substack{i,j\geq 1\\ \operatorname{lcm}(i,j)=\ell}}
			a_i(g)b_j(g)\gcd(i,j)
		}.
		\]
		This simplifies to the stated formula.
	\end{proof}
	
	A useful specialization is obtained by keeping only the number of fixed points
	of each left translation.
	
	\begin{corollary}[Fixed-point polynomial]\label{cor:fixed-point-polynomial}
		Let
		\[
		\operatorname{Fix}_E(g)
		:=
		\{\beta\in E\mid g\bullet\beta=\beta\}
		\]
		and let
		\[
		C_G(g):=\{h\in G\mid gh=hg\}
		\]
		be the centralizer of \(g\) in \(G\). Define
		\[
		\Phi_D(t)
		:=
		\sum_{x\in D}
		t^{\lvert \operatorname{Fix}(L_x)\rvert }.
		\]
		Then
		\[
		\Phi_D(t)
		=
		\lvert E\rvert 
		\sum_{g\in G}
		t^{\,\lvert C_G(g)\vert \,\lvert \operatorname{Fix}_E(g)\rvert }.
		\]
	\end{corollary}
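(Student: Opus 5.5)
The plan is to reduce the count of fixed points of each left translation to a product of two independent fixed-point counts, and then regroup the sum over \(D\) by the first coordinate, exactly as in the proof of Theorem~\ref{thm:cycle-index}.

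\textbf{Step 1 (fixed points of one translation).} By Theorem~\ref{thm:conj-structure}, for every \(x=(g,\alpha)\in G\times E\) we have
\[
L_{(g,\alpha)}=c_g\times\rho(g).
\]
A pair \((h,\beta)\) is fixed by this product permutation if and only if \(ghg^{-1}=h\) and \(g\bullet\beta=\beta\). These two conditions are independent, so
\[
\operatorname{Fix}(L_{(g,\alpha)})=\operatorname{Fix}(c_g)\times\operatorname{Fix}_E(g).
\]
Since \(\operatorname{Fix}(c_g)=C_G(g)\), this gives
\[
\lvert\operatorname{Fix}(L_{(g,\alpha)})\rvert=\lvert C_G(g)\rvert\,\lvert\operatorname{Fix}_E(g)\rvert.
\]
As a cross-check, this is also the \(\ell=1\) case of the cycle count in Theorem~\ref{thm:cycle-index}. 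There \(\operatorname{lcm}(i,j)=1\) forces \(i=j=1\), so the formula reduces to \(a_1(g)b_1(g)\).

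\textbf{Step 2 (regrouping the sum).} The exponent found in Step 1 depends only on \(g\). We write the sum over \(x\in D\) as a sum over \(D\simeq G\times E\), which is a bijection, so the sum is unchanged. We then sum first over \(\alpha\in E\) for each fixed \(g\). Each \(g\) contributes \(\lvert E\rvert\) identical terms, which yields
\[
\Phi_D(t)=\lvert E\rvert\sum_{g\in G}t^{\lvert C_G(g)\rvert\,\lvert\operatorname{Fix}_E(g)\rvert}.
\]

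\textbf{Main point.} There is no real obstacle here. The only point requiring care is that \(\Phi_D\) is defined intrinsically on \(D\), whereas the computation takes place in the model \(G\times E\). This is justified because the isomorphism \(\varphi_l\) of Theorem~\ref{thm:gdigroup-structure} is an isomorphism of generalized digroups. By the functoriality in Proposition~\ref{prop:conj-rack-gdigroup}, it therefore induces a rack isomorphism of conjugation racks. Such an isomorphism conjugates each left translation \(L_x\) to \(L_{\varphi_l(x)}\), and hence preserves the number of fixed points of each left translation.
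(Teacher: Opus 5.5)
Your proposal is correct and follows essentially the same argument as the paper: from $L_{(g,\alpha)}=c_g\times\rho(g)$ you get $\lvert\operatorname{Fix}(L_{(g,\alpha)})\rvert=\lvert C_G(g)\rvert\,\lvert\operatorname{Fix}_E(g)\rvert$, and since this depends only on $g$ with each $g$ occurring $\lvert E\rvert$ times, the formula follows. Your extra remark is also valid: the rack isomorphism induced by $\varphi_l$ preserves the number of fixed points of each left translation, which makes explicit the passage from $D$ to the model $G\times E$ that the paper leaves implicit.
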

	
	\begin{proof}
		By Theorem~\ref{thm:conj-structure},
		\[
		L_{(g,\alpha)}(h,\beta)=(h,\beta)
		\]
		if and only if
		\[
		ghg^{-1}=h
		\qquad\text{and}\qquad
		g\bullet\beta=\beta.
		\]
		The first condition is equivalent to \(h\in C_G(g)\), and the second to
		\(\beta\in\operatorname{Fix}_E(g)\). Hence
		\[
		\lvert \operatorname{Fix}(L_{(g,\alpha)})\rvert
		=
		\lvert C_G(g)\rvert \,\lvert \operatorname{Fix}_E(g)\rvert.
		\]
		Since \(L_{(g,\alpha)}\) depends only on \(g\), and since there are \(|E|\)
		elements of the form \((g,\alpha)\), summing over all \(g\in G\) gives the
		claim.
	\end{proof}
	
	The same fixed-point data also determines the number of orbits of the inner
	permutation group.
	
	\begin{corollary}[Orbit formula]\label{cor:orbit-formula}
		Let \(D\) be a finite generalized digroup and write \(D\simeq G\times E\)
		as above. Then the number of orbits of \(\operatorname{Inn}(\Conj(D))\) on
		\(\Conj(D)\) is
		\[
		\#\bigl((G\times E)/\operatorname{Inn}(\Conj(D))\bigr)
		=
		\frac{1}{\lvert G\rvert }
		\sum_{g\in G}
		\lvert C_G(g)\rvert \,\lvert \operatorname{Fix}_E(g)\rvert .
		\]
	\end{corollary}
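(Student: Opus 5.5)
The plan is to apply the Cauchy--Frobenius (Burnside) lemma to an ordinary finite group acting on \(G\times E\). Theorem~\ref{thm:conj-structure} gives a surjective homomorphism
\[
\lambda:G\longrightarrow\operatorname{Inn}(\Conj(D)),
\qquad
\lambda(g)=c_g\times\rho(g),
\]
with kernel \(N:=Z(G)\cap\operatorname{Ker}(\rho)\). Hence the orbits of \(\operatorname{Inn}(\Conj(D))\) on \(G\times E\) are exactly the orbits of the \(G\)-action
\[
g\cdot(h,\beta)=(ghg^{-1},\,g\bullet\beta).
\]
Indeed, each element of the inner group is \(\lambda(g)\) for some \(g\), so both actions move points through the same set of permutations. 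It is therefore enough to count orbits of this \(G\)-action.

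Burnside's lemma then gives the number of orbits as
\[
\frac{1}{|G|}\sum_{g\in G}\bigl|\operatorname{Fix}_{G\times E}(g)\bigr|.
\]
Alternatively, one can apply the lemma directly to \(\operatorname{Inn}(\Conj(D))\cong G/N\). In that case one uses that the fiber of \(\lambda\) over each element has size \(|N|\), so averaging over \(G\) equals averaging over \(G/N\). Both routes give the same formula.

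The remaining step is the fixed-point count. This was already done in the proof of Corollary~\ref{cor:fixed-point-polynomial}: \((h,\beta)\) is fixed by \(c_g\times\rho(g)\) if and only if \(h\in C_G(g)\) and \(\beta\in\operatorname{Fix}_E(g)\). So the number of fixed points of \(g\) is \(|C_G(g)|\,|\operatorname{Fix}_E(g)|\), and substituting this gives the stated formula.

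The only point that needs care is the passage from the inner group to \(G\) when \(\lambda\) has nontrivial kernel. The plan handles this by noting that orbits depend only on the image subgroup of \(\operatorname{Sym}(G\times E)\), and that Burnside's average is unchanged under pulling back along a surjection, since every fiber has the same size. Beyond that, the argument is a routine application of Burnside's lemma.
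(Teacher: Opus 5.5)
Your proposal is correct and follows the same route as the paper. You let \(G\) act on \(G\times E\) through \(g\mapsto c_g\times\rho(g)\), note that its orbits coincide with those of \(\operatorname{Inn}(\Conj(D))\) despite the possible kernel \(Z(G)\cap\operatorname{Ker}(\rho)\), apply Burnside's lemma to \(G\), and substitute the fixed-point count \(\lvert C_G(g)\rvert\,\lvert\operatorname{Fix}_E(g)\rvert\) from Corollary~\ref{cor:fixed-point-polynomial}; your averaging over \(G/N\) is just an equivalent justification of the same step.
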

	
	\begin{proof}
		The inner group action on \(G\times E\) is induced by the homomorphism
		\[
		G\longrightarrow \operatorname{Sym}(G\times E),
		\qquad
		g\longmapsto c_g\times\rho(g).
		\]
		Although this homomorphism may have a nontrivial kernel, Burnside's lemma
		applied to \(G\) gives the correct orbit count, because the \(G\)-orbits are
		precisely the \(\operatorname{Inn}(\Conj(D))\)-orbits. Thus
		\[
		\#\bigl((G\times E)/\operatorname{Inn}(\Conj(D))\bigr)
		=
		\frac{1}{|G|}
		\sum_{g\in G}
		\lvert \operatorname{Fix}(c_g\times\rho(g))\rvert.
		\]
		Now
		\[
		\lvert\operatorname{Fix}(c_g\times\rho(g))\rvert
		=
		\lvert C_G(g)\rvert\,\lvert\operatorname{Fix}_E(g)\rvert.
		\]
		The formula follows.
	\end{proof}
	
	We now turn to subracks. Throughout the rest of this section, subracks are
	assumed to be nonempty.
	
	\begin{theorem}[Subrack structure theorem]\label{thm:subrack-structure}
		Let \(D\) be a finite generalized digroup and write
		\[
		D\simeq G\times E
		\]
		as above. Let \(X\subseteq G\times E\) be a nonempty subset, and define
		\[
		Y:=\pi(X)\subseteq G,
		\qquad
		X_g:=\{\beta\in E\mid (g,\beta)\in X\}
		\qquad
		(g\in Y).
		\]
		Then \(X\) is a subrack of \(\Conj(D)\) if and only if the following
		conditions hold:
		\begin{enumerate}
			\item \(Y\) is a subrack of \(\Conj(G)\);
			\item for every \(g,h\in Y\),
			\[
			g\bullet X_h\subseteq X_{ghg^{-1}}.
			\]
		\end{enumerate}
	\end{theorem}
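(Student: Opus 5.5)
Since \(D\) is finite, the discussion of subracks in Section~\ref{sec:preliminaries} lets us replace ``subrack'' by ``nonempty subset closed under \(\triangleright\)'' in both \(\Conj(D)\) and \(\Conj(G)\). The proof then reduces to unpacking the explicit formula \eqref{eq:conj-rack-formula} from Theorem~\ref{thm:conj-structure}. The key observation is that for \((g,\alpha),(h,\beta)\in X\),
\[
(g,\alpha)\triangleright(h,\beta)=(ghg^{-1},g\bullet\beta),
\]
and this product lies in \(X\) exactly when \(ghg^{-1}\in Y\) and \(g\bullet\beta\in X_{ghg^{-1}}\). We also record that \(X=\bigsqcup_{g\in Y}\{g\}\times X_g\), with each \(X_g\) nonempty for \(g\in Y\).

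For the forward direction, assume \(X\) is closed under \(\triangleright\).

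Given \(g,h\in Y\), we pick \(\alpha\in X_g\) and \(\beta\in X_h\), which is possible by nonemptiness of the fibers. The product \((ghg^{-1},g\bullet\beta)\) then lies in \(X\).

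Applying \(\pi\) gives \(ghg^{-1}\in Y\), so \(Y\) is closed under conjugation. Since \(Y\) is nonempty (because \(X\) is) and \(G\) is finite, \(Y\) is a subrack of \(\Conj(G)\). Equivalently, this follows because \(\pi\) is a rack morphism by Theorem~\ref{thm:conj-structure}.

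For condition 2, we keep \(\alpha\in X_g\) fixed and let \(\beta\) range over all of \(X_h\). Each resulting product lies in \(X\), which gives \(g\bullet\beta\in X_{ghg^{-1}}\) for every \(\beta\in X_h\), that is, \(g\bullet X_h\subseteq X_{ghg^{-1}}\).

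For the converse, take \((g,\alpha),(h,\beta)\in X\), so that \(g,h\in Y\) and \(\beta\in X_h\).

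Condition 1 gives \(ghg^{-1}\in Y\), so the fiber \(X_{ghg^{-1}}\) is defined. Condition 2 gives \(g\bullet\beta\in X_{ghg^{-1}}\). Hence the product \((ghg^{-1},g\bullet\beta)\) lies in \(X\), and \(X\) is closed under \(\triangleright\).

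Since \(X\) is nonempty and finite, each \(L_{(g,\alpha)}\) restricts to an injection of \(X\) into itself, hence to a bijection. So \(X\) is a subrack.

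There is no serious obstacle here. The only points requiring care are two. First, condition 2 only makes sense once \(ghg^{-1}\in Y\) is known, so condition 1 must be established or assumed first. Second, the forward direction uses that the fibers \(X_g\) are nonempty, which is what lets us choose \(\alpha\) and quantify over all \(\beta\in X_h\).

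The finiteness hypothesis is used only to avoid checking closure under inverse translations. Alternatively, one could check that closure directly: the inverse translation \((h,\beta)\mapsto(g^{-1}hg,g^{-1}\bullet\beta)\) preserves \(X\) because the bijection argument applies fiberwise.
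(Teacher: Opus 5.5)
Your proof is correct and follows the paper's argument essentially step for step: closure of $Y$ via $\pi$ (or the direct computation you give), condition~2 by choosing some $(g,\alpha)\in X$ over each $g\in Y$, and the converse by reading off $(g,\alpha)\triangleright(h,\beta)=(ghg^{-1},g\bullet\beta)$ together with the finite-case reduction of subracks to $\triangleright$-closed subsets. Your closing ``fiberwise'' remark is not really an independent alternative, since it still relies on finiteness (injective self-maps of finite sets are bijective), but nothing in the main proof depends on it.
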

	
	\begin{proof}
		Assume first that \(X\) is a subrack of \(\Conj(D)\). Since
		\[
		\pi:\Conj(D)\longrightarrow\Conj(G)
		\]
		is a rack morphism by Theorem~\ref{thm:conj-structure}, the image
		\[
		Y=\pi(X)
		\]
		is closed under the rack operation of \(\Conj(G)\). Since \(X\) is
		nonempty, \(Y\) is nonempty. Thus \(Y\) is a subrack of \(\Conj(G)\).
		
		Now let \(g,h\in Y\), and let \(\beta\in X_h\). By definition,
		\[
		(h,\beta)\in X.
		\]
		Since \(g\in Y\), there exists \(\alpha\in E\) such that
		\[
		(g,\alpha)\in X.
		\]
		Because \(X\) is closed under the rack operation,
		\[
		(g,\alpha)\triangleright(h,\beta)
		=
		(ghg^{-1},g\bullet\beta)
		\in X.
		\]
		Therefore
		\[
		g\bullet\beta\in X_{ghg^{-1}},
		\]
		and hence
		\[
		g\bullet X_h\subseteq X_{ghg^{-1}}.
		\]
		
		Conversely, assume that (1) and (2) hold. Let
		\[
		(g,\alpha),(h,\beta)\in X.
		\]
		Then \(g,h\in Y\) and \(\beta\in X_h\). By condition (2),
		\[
		g\bullet\beta\in X_{ghg^{-1}}.
		\]
		Equivalently,
		\[
		(ghg^{-1},g\bullet\beta)\in X.
		\]
		Since
		\[
		(g,\alpha)\triangleright(h,\beta)
		=
		(ghg^{-1},g\bullet\beta),
		\]
		it follows that \(X\) is closed under the rack operation. Therefore \(X\)
		is a subrack of \(\Conj(D)\).
	\end{proof}
	
	The previous theorem immediately produces a useful family of product-type
	subracks.
	
	\begin{corollary}[Product-type subracks]\label{cor:product-subracks}
		Let \(Y\) be a subrack of \(\Conj(G)\), and let \(F\subseteq E\) be a
		nonempty subset stable under the action of the subgroup generated by \(Y\)
		inside \(G\). Then
		\[
		Y\times F
		\]
		is a subrack of \(\Conj(D)\).
	\end{corollary}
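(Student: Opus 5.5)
We will deduce the statement directly from Theorem~\ref{thm:subrack-structure}, applied to \(X:=Y\times F\). The first step is to compute the invariants appearing there. Since \(F\) is nonempty, \(X\) is nonempty and its projection is
\[
\pi(X)=Y .
\]
The fibre over each \(g\in Y\) is \(X_g=F\). Condition~(1) of Theorem~\ref{thm:subrack-structure} is then precisely the hypothesis that \(Y\) is a subrack of \(\Conj(G)\).

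It remains to check condition~(2): for all \(g,h\in Y\),
\[
g\bullet X_h\subseteq X_{ghg^{-1}} .
\]
Since \(Y\) is a subrack, \(ghg^{-1}=g\triangleright h\in Y\). Hence \(X_{ghg^{-1}}=F\), and the same holds for \(X_h\). The condition therefore reduces to \(g\bullet F\subseteq F\) for every \(g\in Y\). This follows at once from the hypothesis that \(F\) is stable under the subgroup \(\langle Y\rangle\), because each \(g\in Y\) lies in \(\langle Y\rangle\). With both conditions verified, Theorem~\ref{thm:subrack-structure} gives that \(Y\times F\) is a subrack of \(\Conj(D)\).

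There is essentially no obstacle. The only point needing care is the observation that \(ghg^{-1}\in Y\), which makes the target fibre equal to \(F\) rather than empty. The argument also shows that stability of \(F\) under \(Y\) alone suffices. Since \(E\) is finite, stability under \(Y\) is equivalent to stability under \(\langle Y\rangle\), so the stated hypothesis is not stronger than what is needed.
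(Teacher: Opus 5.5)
Your proposal is correct and follows essentially the same route as the paper: apply Theorem~\ref{thm:subrack-structure} to \(X=Y\times F\), note that every fibre equals \(F\) and that \(ghg^{-1}\in Y\), and reduce condition~(2) to \(g\bullet F\subseteq F\). Your closing remark is also correct: stability under \(Y\) alone suffices, and by finiteness it is equivalent to stability under \(\langle Y\rangle\).
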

	
	\begin{proof}
		For every \(h\in Y\), one has
		\[
		(Y\times F)_h=F.
		\]
		If \(g,h\in Y\), then
		\[
		ghg^{-1}\in Y
		\]
		because \(Y\) is a subrack of \(\Conj(G)\). Moreover, since \(F\) is stable
		under the subgroup generated by \(Y\),
		\[
		g\bullet F\subseteq F.
		\]
		Thus
		\[
		g\bullet(Y\times F)_h
		=
		g\bullet F
		\subseteq
		F
		=
		(Y\times F)_{ghg^{-1}}.
		\]
		The result follows from Theorem~\ref{thm:subrack-structure}.
	\end{proof}
	
	\begin{corollary}[A canonical product subposet]\label{cor:product-sublattice}
		Let
		\[
		\operatorname{Subrack}(\Conj(G))
		\]
		denote the poset of nonempty subracks of \(\Conj(G)\), and let
		\[
		\mathcal P(E)^G_{\neq\varnothing}
		:=
		\{F\subseteq E\mid F\neq\varnothing,\ g\bullet F=F\ \text{for all }g\in G\}
		\]
		be the poset of nonempty \(G\)-stable subsets of \(E\). Then the map
		\[
		\operatorname{Subrack}(\Conj(G))
		\times
		\mathcal P(E)^G_{\neq\varnothing}
		\longrightarrow
		\operatorname{Subrack}(\Conj(D)),
		\qquad
		(Y,F)\longmapsto Y\times F,
		\]
		is injective and order-preserving.
	\end{corollary}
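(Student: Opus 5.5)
The plan has three parts: show the map is well defined, show it is order-preserving, and show it is injective. Throughout, \(\operatorname{Subrack}(\Conj(G))\times\mathcal P(E)^G_{\neq\varnothing}\) carries the product order, and \(\operatorname{Subrack}(\Conj(D))\) is ordered by inclusion.

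\emph{Well-definedness.} Let \(Y\) be a nonempty subrack of \(\Conj(G)\) and \(F\) a nonempty \(G\)-stable subset of \(E\). Since \(F\) is stable under all of \(G\), it is in particular stable under the subgroup generated by \(Y\). Corollary~\ref{cor:product-subracks} then shows that \(Y\times F\) is a subrack of \(\Conj(D)\). It is nonempty because both factors are nonempty. So \(Y\times F\) lies in \(\operatorname{Subrack}(\Conj(D))\).

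\emph{Order preservation.} Suppose \(Y\subseteq Y'\) and \(F\subseteq F'\). Then \(Y\times F\subseteq Y'\times F'\) directly from the definition of the Cartesian product.

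\emph{Injectivity.} This is the only step needing care, and the key point is nonemptiness. Suppose \(Y\times F=Y'\times F'\). Apply the projection \(\pi\) of Theorem~\ref{thm:conj-structure}. Because \(F\neq\varnothing\), we get \(\pi(Y\times F)=Y\); likewise \(\pi(Y'\times F')=Y'\), so \(Y=Y'\). Next pick any \(h\in Y\), which exists since \(Y\neq\varnothing\). Taking the fibre over \(h\) as in Theorem~\ref{thm:subrack-structure} gives \((Y\times F)_h=F\) and \((Y'\times F')_h=F'\), so \(F=F'\).

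Optionally, one can add that the map is in fact an order embedding. If \(Y\times F\subseteq Y'\times F'\) with all four sets nonempty, then projecting and taking fibres in the same way gives \(Y\subseteq Y'\) and \(F\subseteq F'\). This is slightly stronger than the statement requires.

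No real obstacle is expected. The only thing to watch is that both nonemptiness hypotheses are used in the injectivity step: if \(F\) were empty, every \(Y\) would map to the empty set, and the projection argument would fail.
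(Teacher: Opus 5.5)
Your proposal is correct and follows essentially the same route as the paper. Well-definedness comes from Corollary~\ref{cor:product-subracks}, since \(G\)-stability gives stability under the subgroup generated by \(Y\); order preservation is immediate for Cartesian products; and injectivity follows by projecting to recover \(Y\) and reading off \(F\) as the fibre over a chosen \(h\in Y\). Your additional remark that the map is an order embedding is also correct and slightly strengthens the stated result.
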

	
	\begin{proof}
		If \(F\) is \(G\)-stable, then it is stable under the subgroup generated by
		any subrack \(Y\subseteq G\). Therefore
		Corollary~\ref{cor:product-subracks} shows that \(Y\times F\) is a subrack
		of \(\Conj(D)\).
		
		The map is order-preserving because
		\[
		Y_1\subseteq Y_2,\quad F_1\subseteq F_2
		\qquad\Longrightarrow\qquad
		Y_1\times F_1\subseteq Y_2\times F_2.
		\]
		To prove injectivity, suppose that
		\[
		Y_1\times F_1=Y_2\times F_2.
		\]
		Projecting to the first coordinate gives
		\[
		Y_1=Y_2.
		\]
		Since \(Y_1\) is nonempty, choose \(g\in Y_1\). Then
		\[
		\{g\}\times F_1
		=
		(Y_1\times F_1)\cap(\{g\}\times E)
		=
		(Y_2\times F_2)\cap(\{g\}\times E)
		=
		\{g\}\times F_2.
		\]
		Hence \(F_1=F_2\). Thus the map is injective.
	\end{proof}
	
	\begin{remark}[Trivial action]\label{rem:trivial-action}
		If the action of \(G\) on \(E\) is trivial, then
		\[
		(g,\alpha)\triangleright(h,\beta)
		=
		(ghg^{-1},\beta).
		\]
		Hence \(\Conj(D)\) is the direct product of the ordinary conjugation rack
		\(\Conj(G)\) with the trivial rack on \(E\). In this case the combinatorial
		invariants simplify. For instance,
		\[
		\operatorname{Inn}(\Conj(D))
		\cong
		G/Z(G),
		\]
		and
		\[
		\lvert\operatorname{Fix}(L_{(g,\alpha)})\rvert
		=
		|C_G(g)|\,|E|.
		\]
	\end{remark}
	
	\section{Group-like factorization and digroup algebras}
	
	We now return to cocommutative Hopf dialgebras and show that the rack
	constructed from the adjoint rack bialgebra is governed by the conjugation
	rack of the digroup of group-like elements.
	
	\subsection{The group-like functor and the factorization theorem}
	
	\begin{notation}\label{not:group-like-hopf-dialgebra}
		For a cocommutative Hopf dialgebra
		\[
		A=(A,\Delta,\varepsilon,\xi,\vdash,\dashv,S),
		\]
		we use the coalgebraic notation
		\[
		\Glike(A)
		=
		\{g\in A\mid \Delta(g)=g\otimes g,\ \varepsilon(g)=1_K\}
		\]
		for the set of group-like elements of its underlying coalgebra.
	\end{notation}
	
	Since \(\xi\) is group-like, one has \(\xi\in\mathrm{Glike}(A)\).
	Moreover, because the two dialgebra products are coalgebra morphisms,
	the set of group-like elements is stable under both products. The next
	result shows that this stability is strong enough to produce a digroup,
	not merely a generalized digroup.
	
	\begin{proposition}\label{prop:glike-digroup}
		Let
		\[
		A=(A,\Delta,\varepsilon,\xi,\vdash,\dashv,S)
		\]
		be a cocommutative Hopf dialgebra. Then \(\mathrm{Glike}(A)\) is a
		digroup with distinguished bar-unit \(\xi\). More precisely:
		\begin{enumerate}
			\item \(\mathrm{Glike}(A)\) is closed under \(\vdash\) and \(\dashv\);
			\item \(\xi\) is a bar-unit for the restricted products;
			\item for every \(g\in\mathrm{Glike}(A)\), the element \(S(g)\) belongs
			to \(\mathrm{Glike}(A)\) and satisfies
			\[
			g\vdash S(g)=\xi=S(g)\dashv g.
			\]
		\end{enumerate}
		In particular, the inverse map of the digroup \(\mathrm{Glike}(A)\) is the
		restriction of the antipode \(S\).
	\end{proposition}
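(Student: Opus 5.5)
The plan is to verify the three listed properties directly from Definition~\ref{def:hopf-dialgebra}, and then check that these give the axioms of a digroup from Section~\ref{sec:preliminaries}.

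For (1), take \(g,h\in\Glike(A)\) and \(\star\in\{\vdash,\dashv\}\). Since \(\star\) is a coalgebra morphism, the Sweedler formula gives
\[
\Delta(g\star h)=(g\star h)\otimes(g\star h),\qquad \varepsilon(g\star h)=\varepsilon(g)\varepsilon(h)=1 .
\]
Hence \(g\star h\in\Glike(A)\).

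For (2), \(\xi\in\Glike(A)\) by condition~3 of the definition. The bar-unit identities \(\xi\vdash a=a=a\dashv\xi\) hold for every \(a\in A\), so in particular they hold on \(\Glike(A)\).

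The restricted products inherit associativity and the three mixed identities, because these are identities on all of \(A\). So \(\Glike(A)\) already satisfies the algebraic part of the generalized digroup axioms.

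For (3), \(S\) is a coalgebra morphism and therefore preserves group-like elements, so \(S(g)\in\Glike(A)\). Now insert \(\Delta(g)=g\otimes g\) and \(\varepsilon(g)=1\) into the two antipode identities. They become
\[
g\vdash S(g)=\xi,\qquad S(g)\dashv g=\xi .
\]
Thus \(S(g)\) is a single two-sided inverse in the digroup sense, relative to the distinguished bar-unit \(\xi\).

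The step I expect to need the most care is the generalized-digroup requirement that one-sided inverses exist relative to \emph{every} bar-unit of \(\Glike(A)\), not only relative to \(\xi\). The halo of \(\Glike(A)\) may be larger than \(\{\xi\}\). I would handle this with explicit inverses built from \(S(g)\). Let \(\eta\in\Halo(\Glike(A))\). For a left inverse, put \(x=\eta\vdash S(g)\). Then
\[
x\dashv g=\eta\vdash(S(g)\dashv g)=\eta\vdash\xi=\xi\dashv\eta .
\]
Here the first equality is the mixed identity, the second uses \(S(g)\dashv g=\xi\), and the third uses balance. Finally \(\xi\dashv\eta=\eta\), because \(\eta\) is a bar-unit of \(\Glike(A)\) and \(\xi\in\Glike(A)\). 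The symmetric choice \(y=S(g)\dashv\eta\) gives a right inverse:
\[
g\vdash y=(g\vdash S(g))\dashv\eta=\xi\dashv\eta=\eta .
\]
Here the first equality is again the mixed identity. Both \(x\) and \(y\) are group-like by (1).

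Altogether, \(\Glike(A)\) is a generalized digroup whose distinguished bar-unit \(\xi\) admits the single inverse \(S(g)\). By definition it is therefore a digroup, and its inverse map is \(S|_{\Glike(A)}\).
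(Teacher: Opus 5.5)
Your parts (1)--(3) coincide with the paper's proof. The paper stops there and concludes directly that $\Glike(A)$ is a digroup, without discussing bar-units other than $\xi$.

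You are right that the paper's definition formally requires more. A digroup is in particular a generalized digroup, so one-sided inverses are needed relative to every $\eta\in\Halo(\Glike(A))$. You are also right that this halo can be bigger than $\{\xi\}$. For Kinyon's digroup $D=M\times H$ of Example~\ref{ex:kinyon-action-digroup}, every $(w,1)$ is a bar-unit of $\Glike(K[D])=D$.

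However, your verification of this extra point fails. The last step of both computations, $\xi\dashv\eta=\eta$, is false. Since $\eta$ is a bar-unit of $\Glike(A)$ and $\xi\in\Glike(A)$, the identity $a\dashv\eta=a$ gives $\xi\dashv\eta=\xi$. In Kinyon's model, $(e,1)\dashv(w,1)=(e,1)\neq(w,1)$ for $w\neq e$. In fact your candidates are not new elements at all. Because $\eta$ is a bar-unit on the group-like element $S(g)$, we have $\eta\vdash S(g)=S(g)$ and $S(g)\dashv\eta=S(g)$. So your chains actually end in $x\dashv g=\xi$ and $g\vdash y=\xi$, and you have only reproduced the inverses relative to $\xi$.

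The repair is to use the other product in each case, and the bar-unit property of $\xi$ in $A$ instead of that of $\eta$. Put $y:=S(g)\vdash\eta$ and $x:=\eta\dashv S(g)$; both are group-like by (1). Associativity of $\vdash$ and of $\dashv$ gives
\[
g\vdash\bigl(S(g)\vdash\eta\bigr)=\bigl(g\vdash S(g)\bigr)\vdash\eta=\xi\vdash\eta=\eta,
\]
\[
\bigl(\eta\dashv S(g)\bigr)\dashv g=\eta\dashv\bigl(S(g)\dashv g\bigr)=\eta\dashv\xi=\eta.
\]
Neither the mixed identities nor balance is needed here. With this correction your argument is complete, and it makes explicit a point that the paper's proof leaves implicit.
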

	
	\begin{proof}
		Let \(g,h\in\mathrm{Glike}(A)\). Since each product
		\(\star\in\{\vdash,\dashv\}\) is a coalgebra morphism, we have
		\[
		\Delta(g\star h)
		=
		\sum_{(g),(h)}
		(g_{(1)}\star h_{(1)})\otimes(g_{(2)}\star h_{(2)}).
		\]
		Because \(g\) and \(h\) are group-like, this becomes
		\[
		\Delta(g\star h)
		=
		(g\star h)\otimes(g\star h).
		\]
		Similarly, since the counit is multiplicative with respect to both products,
		\[
		\varepsilon(g\star h)=\varepsilon(g)\varepsilon(h)=1.
		\]
		Thus \(g\vdash h\) and \(g\dashv h\) belong to \(\mathrm{Glike}(A)\).
		
		The dialgebra identities hold on all of \(A\), hence they restrict to
		\(\mathrm{Glike}(A)\). Moreover, the bar-unit identities in \(A\) give
		\[
		\xi\vdash g=g=g\dashv \xi
		\qquad
		(\forall g\in\mathrm{Glike}(A)).
		\]
		Therefore \(\xi\) is a bar-unit for the restricted dialgebra structure.
		
		It remains to check the inverse condition. Let \(g\in\mathrm{Glike}(A)\).
		Since \(S\) is a coalgebra morphism, we have
		\[
		\Delta(S(g))
		=
		(S\otimes S)\Delta(g)
		=
		(S\otimes S)(g\otimes g)
		=
		S(g)\otimes S(g),
		\]
		and
		\[
		\varepsilon(S(g))=\varepsilon(g)=1.
		\]
		Hence \(S(g)\in\mathrm{Glike}(A)\).
		
		Now apply the antipode identities
		\[
		\sum_{(a)}a_{(1)}\vdash S(a_{(2)})
		=
		\varepsilon(a)\xi,
		\qquad
		\sum_{(a)}S(a_{(1)})\dashv a_{(2)}
		=
		\varepsilon(a)\xi
		\]
		to \(a=g\). Since \(\Delta(g)=g\otimes g\), these identities reduce to
		\[
		g\vdash S(g)=\varepsilon(g)\xi=\xi
		\]
		and
		\[
		S(g)\dashv g=\varepsilon(g)\xi=\xi.
		\]
		Thus \(S(g)\) is simultaneously a right inverse of \(g\) for \(\vdash\)
		and a left inverse of \(g\) for \(\dashv\), with respect to the same
		bar-unit \(\xi\). Therefore \(\mathrm{Glike}(A)\) is a digroup.
	\end{proof}
	
	\begin{remark}\label{rem:inverse-notation-glike}
		When \(\mathrm{Glike}(A)\) is regarded as a digroup, we shall write
		\[
		g^{-1}:=S(g)
		\qquad
		(g\in\mathrm{Glike}(A)).
		\]
		Thus
		\[
		g\vdash g^{-1}=\xi=g^{-1}\dashv g.
		\]
	\end{remark}
	
	The previous proposition is stronger than what is needed to obtain a functor
	to generalized digroups. Indeed, the image lies in the full subcategory
	\(\mathbf{Dig}\subseteq\mathbf{gDig}\).
	
	\begin{proposition}\label{prop:glike-functor}
		The assignment
		\[
		A\longmapsto \mathrm{Glike}(A)
		\]
		extends to a functor
		\[
		\mathrm{Glike}:\mathbf{HopfDialg}\longrightarrow \mathbf{Dig}.
		\]
		Composing with the inclusion
		\[
		\mathbf{Dig}\hookrightarrow \mathbf{gDig},
		\]
		we also obtain a functor
		\[
		\mathrm{Glike}:\mathbf{HopfDialg}\longrightarrow \mathbf{gDig}.
		\]
	\end{proposition}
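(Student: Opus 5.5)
The plan is to define $\mathrm{Glike}$ on morphisms by restriction and check that this is well defined and functorial. The object part is already settled: by Proposition~\ref{prop:glike-digroup}, for every cocommutative Hopf dialgebra $A$, the set $\mathrm{Glike}(A)$ is a digroup with distinguished bar-unit $\xi$ and inverse $g^{-1}=S(g)$. So only morphisms need work.

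Let $f:A\to A'$ be a morphism in $\mathbf{HopfDialg}$: a coalgebra morphism with $f(\xi)=\xi'$, $f(a\star b)=f(a)\star f(b)$ for $\star\in\{\vdash,\dashv\}$, and $f\circ S=S'\circ f$. The steps, in order:

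\begin{enumerate}
\item Show $f$ maps group-likes to group-likes. For $g\in\mathrm{Glike}(A)$, the coalgebra-morphism property gives $\Delta'(f(g))=(f\otimes f)(g\otimes g)=f(g)\otimes f(g)$ and $\varepsilon'(f(g))=\varepsilon(g)=1$. So $f$ restricts to a map $\mathrm{Glike}(f):\mathrm{Glike}(A)\to\mathrm{Glike}(A')$.
\item Check this restriction is a digroup morphism. It preserves both products because $f$ does. It sends the distinguished bar-unit $\xi$ to $\xi'$.
\item Check compatibility with inverses, via $f(S(g))=S'(f(g))$. This step is not strictly needed, since inverses in a digroup are determined by the products and the bar-unit. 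Indeed, $f(g)\vdash f(S(g))=f(\xi)=\xi'$ and $f(S(g))\dashv f(g)=\xi'$ already force $f(S(g))$ to be the inverse of $f(g)$.
\item Check bar-unit preservation in the $\mathbf{gDig}$ sense, $f(\Halo(\mathrm{Glike}(A)))\subseteq\Halo(\mathrm{Glike}(A'))$.
\end{enumerate}

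Step 4 is the one point requiring care, and I expect it to be the main obstacle. In a digroup the halo may contain bar-units other than $\xi$, so one must show that the image of an arbitrary bar-unit $\eta$ of $\mathrm{Glike}(A)$ is a bar-unit of $\mathrm{Glike}(A')$. My first attempt will be to note that the morphisms of $\mathbf{Dig}$ were defined as distinguished-bar-unit-preserving maps, so for the functor into $\mathbf{Dig}$ only $f(\xi)=\xi'$ is required, and that holds by hypothesis.

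For the composite into $\mathbf{gDig}$, the paper declares $\mathbf{Dig}$ a full subcategory of $\mathbf{gDig}$. I will therefore argue within that convention: a map of digroups preserving both products and the distinguished bar-unit is taken as a $\mathbf{gDig}$-morphism. If a direct argument is wanted instead, I would try to show that every bar-unit $\eta$ has the form $\eta=\xi\dashv\eta$ type expressions whose images are controlled. This may fail in general when $f$ is not surjective, which is exactly why relying on the category convention is the safer route.

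Functoriality is then immediate. Restriction commutes with composition, $\mathrm{Glike}(f'\circ f)=\mathrm{Glike}(f')\circ\mathrm{Glike}(f)$, and $\mathrm{Glike}(\id_A)=\id_{\mathrm{Glike}(A)}$, because both are restrictions of set maps. Composing with the inclusion $\mathbf{Dig}\hookrightarrow\mathbf{gDig}$ gives the second functor.
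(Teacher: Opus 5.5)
Your proposal is correct and is essentially the paper's proof. Both restrict $f$ to group-likes via the coalgebra-morphism property, note that $\vdash$, $\dashv$, $\xi$ and $S$ are preserved, inherit identities and composition, and pass to $\mathbf{gDig}$ simply by composing with the inclusion; the paper does not check halo preservation separately either.

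Your worry in Step 4 is unfounded, though, and you do not need the category convention. In any generalized digroup the bar-units are exactly the $\dashv$-idempotents. Indeed, if $\eta\dashv\eta=\eta$, then $\xi\dashv\eta=(\eta^{-1}_{l,\xi}\dashv\eta)\dashv\eta=\xi$, so $y\dashv\eta=(y\dashv\xi)\dashv\eta=y$ and $\eta\vdash y=(\xi\vdash\eta)\vdash y=(\xi\dashv\eta)\vdash y=\xi\vdash y=y$. Hence any $\dashv$-preserving map, surjective or not, sends bar-units to bar-units.
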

	
	\begin{proof}
		Let
		\[
		f:A\longrightarrow B
		\]
		be a morphism of cocommutative Hopf dialgebras. Thus \(f\) is a coalgebra
		morphism preserving the distinguished bar-unit, both products and the
		antipode.
		
		If \(g\in\mathrm{Glike}(A)\), then
		\[
		\Delta(f(g))
		=
		(f\otimes f)\Delta(g)
		=
		(f\otimes f)(g\otimes g)
		=
		f(g)\otimes f(g),
		\]
		and
		\[
		\varepsilon(f(g))=\varepsilon(g)=1.
		\]
		Hence \(f(g)\in\mathrm{Glike}(B)\). Moreover, for
		\(g,h\in\mathrm{Glike}(A)\),
		\[
		f(g\vdash h)=f(g)\vdash f(h),
		\qquad
		f(g\dashv h)=f(g)\dashv f(h),
		\]
		and
		\[
		f(S_A(g))=S_B(f(g)).
		\]
		Thus the restriction
		\begin{equation*}
			f\vert_{\Glike(A)}:
			\Glike(A)\longrightarrow \Glike(B)
		\end{equation*}
		is a digroup homomorphism. Identities and compositions are inherited from
		the category of cocommutative Hopf dialgebras. Therefore
		\(\Glike\) defines a functor into \(\mathbf{Dig}\), and hence also
		into \(\mathbf{gDig}\) by inclusion.
	\end{proof}
	
	We now compare the rack attached to a cocommutative Hopf dialgebra by the
	adjoint construction of Alexandre--Bordemann--Rivi\`ere--Wagemann with the
	conjugation rack of its digroup of group-like elements.
	
	\begin{theorem}[Factorization theorem]\label{thm:factorization-through-glike}
		Let
		\[
		\mathcal{R}:
		\mathbf{HopfDialg}
		\longrightarrow
		\mathbf{Rack}
		\]
		be the functor assigning to a cocommutative Hopf dialgebra \(A\) the rack of
		set-like elements of its adjoint rack bialgebra:
		\[
		\mathcal R(A):=\Slike(\Adj(A)).
		\]
		Then there is a natural isomorphism of functors
		\[
		\mathcal{R}
		\cong
		\mathrm{Conj}\circ\mathrm{Glike}.
		\]
		Equivalently, the rack functor associated with cocommutative Hopf dialgebras
		factorizes as
		\[
		\mathbf{HopfDialg}
		\xrightarrow{\ \mathrm{Glike}\ }
		\mathbf{Dig}
		\hookrightarrow
		\mathbf{gDig}
		\xrightarrow{\ \mathrm{Conj}\ }
		\mathbf{Rack}.
		\]
		More explicitly, for every cocommutative Hopf dialgebra \(A\) and every
		\(g,h\in\mathrm{Glike}(A)\), the rack product induced by the adjoint
		construction satisfies
		\[
		g\triangleright h
		=
		\sum_{(g)}(g_{(1)}\vdash h)\dashv S(g_{(2)})
		=
		(g\vdash h)\dashv S(g),
		\]
		and this is precisely the conjugation rack operation of the digroup
		\(\mathrm{Glike}(A)\).
	\end{theorem}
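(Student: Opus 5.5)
The plan is to compare the two functors object by object, and then to check naturality. The object-level identification is the identity map, since \(\Slike(\Adj(A))\) and \(\Glike(A)\) are literally the same set. By Proposition~\ref{prop:adjoint-rack-bialgebra}, \(\Adj(A)\) has the same underlying coalgebra \((A,\Delta,\varepsilon)\) as \(A\). Hence its set-like elements are exactly the group-like elements of \(A\):
\[
\Slike(\Adj(A))=\Glike(A)
\]
as sets. So it suffices to show that the two rack operations on this set coincide, and that the components \(\id_{\Glike(A)}\) form a natural transformation.

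For the operations, take \(g,h\in\Glike(A)\). Since \(\Delta(g)=g\otimes g\), the Sweedler sum in the definition of \(\triangleright\) collapses to a single term:
\[
g\triangleright h=(g\vdash h)\dashv S(g).
\]
By Proposition~\ref{prop:glike-digroup} and Remark~\ref{rem:inverse-notation-glike}, \(S(g)=g^{-1}\) is the digroup inverse of \(g\) in \(\Glike(A)\) relative to \(\xi\). It is simultaneously a right inverse for \(\vdash\) and a left inverse for \(\dashv\). Therefore \((g\vdash h)\dashv S(g)=(g\vdash h)\dashv g^{-1}\), which is exactly the conjugation operation of Proposition~\ref{prop:conj-rack-gdigroup} on the generalized digroup \(\Glike(A)\). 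The independence of the chosen one-sided inverse proved there means no further choice issue arises.

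As a by-product, \(\mathcal R(A)\) is a genuine rack, because \(\Conj(\Glike(A))\) is a rack by Proposition~\ref{prop:conj-rack-gdigroup}. This settles the bijectivity caveat raised before the definition of \(\mathcal R\).

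For naturality, let \(f:A\to B\) be a morphism in \(\mathbf{HopfDialg}\). The map \(\mathcal R(f)\) is the restriction of \(f\) to set-like elements, while \(\Conj(\Glike(f))\) is the restriction of \(f\) to group-like elements (Proposition~\ref{prop:glike-functor}). Since these are the same map on the same sets, the square with identity components commutes trivially. It remains to confirm that this restriction is a rack morphism for \(\mathcal R\), which follows either from the identification of operations or directly from
\[
f((g\vdash h)\dashv S(g))=(f(g)\vdash f(h))\dashv S(f(g)),
\]
using that \(f\) preserves both products and the antipode.

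I expect no serious obstacle. The only delicate point is bookkeeping: the conjugation operation on \(\Glike(A)\) must be taken with respect to the distinguished bar-unit \(\xi\), and the inverse coming from \(S\) must be admissible in Proposition~\ref{prop:conj-rack-gdigroup}. Both hold by Proposition~\ref{prop:glike-digroup}.
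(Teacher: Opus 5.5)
Your proposal is correct and follows essentially the same route as the paper. You identify \(\Slike(\Adj(A))\) with \(\Glike(A)\) as sets via the common underlying coalgebra, collapse the Sweedler sum using \(\Delta(g)=g\otimes g\), and invoke \(S(g)=g^{-1}\) from Proposition~\ref{prop:glike-digroup}; you then take identity components, which are natural because both functors act on morphisms by restriction to group-like elements. Your extra observations, that the identification makes \(\mathcal R(A)\) a genuine rack and that restricted morphisms preserve \(\triangleright\), are refinements the paper leaves implicit.
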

	
	\begin{proof}
		The rack bialgebra \(\mathrm{Adj}(A)\) has the same underlying coalgebra
		\[
		(A,\Delta,\varepsilon,\xi)
		\]
		as the Hopf dialgebra \(A\). Therefore its set-like elements are exactly the
		group-like elements of \(A\):
		\[
		\mathrm{Slike}(\mathrm{Adj}(A))
		=
		\mathrm{Glike}(A)
		\]
		as sets.
		
		We now compare the two rack operations on this common set. Let
		\(g,h\in\mathrm{Glike}(A)\). Since \(g\) is group-like,
		\[
		\Delta(g)=g\otimes g.
		\]
		Hence the adjoint rack product reduces to
		\[
		g\triangleright h
		=
		\sum_{(g)}(g_{(1)}\vdash h)\dashv S(g_{(2)})
		=
		(g\vdash h)\dashv S(g).
		\]
		By Proposition~\ref{prop:glike-digroup}, the inverse of \(g\) in the
		digroup \(\mathrm{Glike}(A)\) is \(g^{-1}=S(g)\). Therefore
		\[
		g\triangleright h
		=
		(g\vdash h)\dashv g^{-1},
		\]
		which is exactly the conjugation rack product on
		\(\mathrm{Conj}(\mathrm{Glike}(A))\).
		
		Thus the identity map on the underlying set defines a rack isomorphism
		\[
		\eta_A:
		\mathrm{Slike}(\mathrm{Adj}(A))
		\xrightarrow{\ \sim\ }
		\mathrm{Conj}(\mathrm{Glike}(A)).
		\]
		If \(f:A\to B\) is a morphism of cocommutative Hopf dialgebras, then the
		restriction of \(f\) to group-like elements is the same map used by both
		constructions. Consequently, the family \(\{\eta_A\}_A\) is natural in
		\(A\). This proves the stated natural factorization.
	\end{proof}
	
	\begin{corollary}\label{cor:factorization-through-digroups}
		The functor
		\[
		\mathcal{R}:
		\mathbf{HopfDialg}\longrightarrow\mathbf{Rack},
		\qquad
		\mathcal R(A)=\Slike(\Adj(A)),
		\]
		actually factorizes through digroups:
		\[
		\mathbf{HopfDialg}
		\xrightarrow{\ \mathrm{Glike}\ }
		\mathbf{Dig}
		\xrightarrow{\ \mathrm{Conj}\ }
		\mathbf{Rack}.
		\]
		In particular, the passage through generalized digroups is obtained by the
		standard inclusion
		\[
		\mathbf{Dig}\hookrightarrow\mathbf{gDig}.
		\]
	\end{corollary}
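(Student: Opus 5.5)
The corollary is a restatement of Theorem~\ref{thm:factorization-through-glike} in which the intermediate inclusion \(\mathbf{Dig}\hookrightarrow\mathbf{gDig}\) is absorbed. The plan is therefore to build a functor \(\Conj|_{\mathbf{Dig}}:\mathbf{Dig}\to\mathbf{Rack}\) and then identify \(\mathcal R\) with \(\Conj|_{\mathbf{Dig}}\circ\Glike\).

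First I will define \(\Conj|_{\mathbf{Dig}}\) as the composite \(\mathbf{Dig}\hookrightarrow\mathbf{gDig}\xrightarrow{\Conj}\mathbf{Rack}\), using the functor of Proposition~\ref{prop:conj-rack-gdigroup}. I will check that the inclusion is a functor. A digroup is a generalized digroup; a morphism in \(\mathbf{Dig}\) preserves both products and the distinguished bar-unit. Such a morphism is also a \(\mathbf{gDig}\)-morphism, because it sends every bar-unit to a bar-unit. To see this, let \(\eta\) be a bar-unit of \(D\) and let \(y\in D'\) lie in the image of \(f\); then \(f(\eta)\vdash y=y=y\dashv f(\eta)\). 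In general \(f\) need not be surjective, so this argument does not cover all of \(D'\). Since the paper's category \(\mathbf{Dig}\) is declared a (full) subcategory of \(\mathbf{gDig}\), I will simply invoke that convention rather than reprove it.

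Second, I will describe the rack product concretely on a digroup \(D\) with distinguished bar-unit \(\xi\) and two-sided inverse \(x^{-1}\). This inverse is at once a right \(\vdash\)-inverse and a left \(\dashv\)-inverse relative to \(\xi\). By the independence statement in Proposition~\ref{prop:conj-rack-gdigroup}, \(\Conj(D)\) is computed by \(x\triangleright y=(x\vdash y)\dashv x^{-1}\). This formula needs no reference to \(\mathbf{gDig}\).

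Third, I will apply Theorem~\ref{thm:factorization-through-glike}. It gives a natural isomorphism \(\mathcal R\cong\Conj\circ\iota\circ\Glike\), where \(\iota\) is the inclusion and \(\Glike\) lands in \(\mathbf{Dig}\) by Proposition~\ref{prop:glike-functor}. By definition \(\Conj\circ\iota=\Conj|_{\mathbf{Dig}}\), so \(\mathcal R\cong\Conj|_{\mathbf{Dig}}\circ\Glike\). The components of this isomorphism are the identity maps \(\eta_A\), and naturality is inherited. The final sentence of the corollary is then just the defining factorization \(\Conj|_{\mathbf{Dig}}=\Conj\circ\iota\).

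The only delicate point is the compatibility of morphism classes between \(\mathbf{Dig}\) and \(\mathbf{gDig}\). Here I rely on the paper's stipulation that \(\mathbf{Dig}\) is a subcategory of \(\mathbf{gDig}\). Alternatively, I can bypass it by checking directly that any product-preserving map of digroups sending \(\xi\mapsto\xi'\) and \(x^{-1}\mapsto f(x)^{-1}\) respects \(x\triangleright y=(x\vdash y)\dashv x^{-1}\). The second condition is automatic for the group-like restrictions, since Hopf dialgebra morphisms preserve \(S\).
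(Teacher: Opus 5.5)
Your proposal is correct and follows the paper. The paper gives no separate proof: it treats the corollary as an immediate rephrasing of Theorem~\ref{thm:factorization-through-glike}, with the inclusion $\mathbf{Dig}\hookrightarrow\mathbf{gDig}$ absorbed into $\Conj$ and the components $\eta_A$ being identity maps.

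The point you defer to the paper's convention can in fact be proved directly, without surjectivity. If $\eta\in\Halo(D)$ and $y\in D'$, then associativity gives $f(\eta)\vdash y=(f(\eta)\vdash f(\xi))\vdash y=f(\eta\vdash\xi)\vdash y=f(\xi)\vdash y=y$, and likewise $y\dashv f(\eta)=y\dashv(f(\xi)\dashv f(\eta))=y\dashv f(\xi)=y$. So every product-preserving map sending the distinguished bar-unit to the distinguished bar-unit automatically sends all bar-units to bar-units.
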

	
	\begin{corollary}\label{cor:combinatorics-for-group-likes}
		Let \(A\) be a cocommutative Hopf dialgebra such that
		\(\mathrm{Glike}(A)\) is finite. Let \(\xi\) be its distinguished bar-unit,
		and write
		\[
		\mathrm{Glike}(A)\simeq G\times E
		\]
		according to the structure theorem for generalized digroups, where
		\(G=G_l^\xi\) and \(E=\mathrm{Halo}(\mathrm{Glike}(A))\). Then the rack
		\(\mathcal{R}(A)\) is identified with the finite rack on \(G\times E\)
		given by
		\[
		(g,\alpha)\triangleright(h,\beta)
		=
		(ghg^{-1},g\bullet\beta).
		\]
		Consequently, the finite rack invariants of conjugation racks of generalized
		digroups, including the inner group, the left-translation cycle index, the
		fixed-point polynomial, orbit formulas and subrack structure, apply directly
		to \(\mathcal{R}(A)\).
	\end{corollary}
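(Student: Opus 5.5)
The plan is a direct composition of two earlier results. First, Theorem~\ref{thm:factorization-through-glike} gives a rack isomorphism
\[
\eta_A:\mathcal R(A)\xrightarrow{\ \sim\ }\Conj(\Glike(A)).
\]
This isomorphism is the identity on the common underlying set \(\Glike(A)=\Slike(\Adj(A))\). Second, Proposition~\ref{prop:glike-digroup} shows that \(\Glike(A)\) is a digroup with distinguished bar-unit \(\xi\), hence in particular a generalized digroup. Since \(\Glike(A)\) is finite by hypothesis, it is a finite generalized digroup, and Theorem~\ref{thm:gdigroup-structure} applies with the bar-unit \(\xi\). This yields the isomorphism of generalized digroups
\[
\varphi_l:\Glike(A)\longrightarrow G\times E,\qquad G=G_l^\xi,\ E=\Halo(\Glike(A)).
\]

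Next, apply the functor \(\Conj\) of Proposition~\ref{prop:conj-rack-gdigroup} to \(\varphi_l\) and to its inverse. Both maps are generalized digroup morphisms, so they preserve products and send bar-units to bar-units. Hence \(\Conj(\varphi_l)\) is a rack isomorphism
\[
\Conj(\Glike(A))\cong\Conj(G\times E).
\]
Formula \eqref{eq:conj-rack-formula}, equivalently Theorem~\ref{thm:conj-structure}, identifies the operation on \(\Conj(G\times E)\) as \((g,\alpha)\triangleright(h,\beta)=(ghg^{-1},g\bullet\beta)\). The composite \(\Conj(\varphi_l)\circ\eta_A\) is then the required identification of \(\mathcal R(A)\) with this finite rack.

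One subtlety must be checked. The rack \(\Conj(\Glike(A))\) is defined using inverses relative to \(\xi\), while the computation in the model uses some bar-unit \((e,\eta)\). Proposition~\ref{prop:conj-rack-gdigroup} shows the formula is independent of the one-sided inverse and of the bar-unit chosen in the model. It is also clear that \(\varphi_l(\xi)=(\xi,\xi)\) is the identity-coordinate bar-unit, so \(\varphi_l\) transports \(\xi\)-relative inverses to inverses relative to \((e,\varphi_l\text{-image of }\xi)\). This step is the only place requiring care, and it is already covered by the earlier independence argument.

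The final sentence of the statement follows from the identification just obtained. Rack isomorphisms preserve all the listed invariants: left translations conjugate, inner groups are isomorphic, cycle types and fixed-point counts of left translations agree, orbits correspond, and subracks correspond bijectively. Therefore Theorem~\ref{thm:conj-structure}, Theorem~\ref{thm:cycle-index}, Corollaries~\ref{cor:fixed-point-polynomial} and \ref{cor:orbit-formula}, and Theorem~\ref{thm:subrack-structure} transfer verbatim to \(\mathcal R(A)\).
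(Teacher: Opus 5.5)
Your proposal is correct and follows the paper's route. You compose the natural isomorphism $\mathcal R(A)\cong\Conj(\Glike(A))$ of Theorem~\ref{thm:factorization-through-glike} with the structural identification $\Glike(A)\simeq G\times E$ relative to $\xi$, read off the product from \eqref{eq:conj-rack-formula}, and transfer the finite invariants along the resulting rack isomorphism, making explicit what the paper's three-line proof leaves implicit (transport along $\Conj(\varphi_l)$ and the check $\varphi_l(\xi)=(e,\xi)$).
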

	
	\begin{proof}
		By Theorem~\ref{thm:factorization-through-glike}, we have a natural rack
		isomorphism
		\[
		\mathcal{R}(A)
		\cong
		\mathrm{Conj}(\mathrm{Glike}(A)).
		\]
		Applying the structure theorem for generalized digroups to
		\(\mathrm{Glike}(A)\), with respect to the bar-unit \(\xi\), gives
		\[
		\mathrm{Glike}(A)\simeq G\times E.
		\]
		Under this identification, the conjugation rack product is
		\[
		(g,\alpha)\triangleright(h,\beta)
		=
		(ghg^{-1},g\bullet\beta).
		\]
		The final statement follows by applying the finite combinatorial results
		for such racks.
	\end{proof}
	
	\begin{remark}[On the one-antipode hypothesis]\label{rem:one-antipode-hypothesis}
		The factorization above uses the balanced one-antipode form of a
		cocommutative Hopf dialgebra. In this setting the same coalgebra morphism
		\(S:A\to A\) satisfies
		\[
		\sum_{(a)}a_{(1)}\vdash S(a_{(2)})=\varepsilon(a)\xi,
		\qquad
		\sum_{(a)}S(a_{(1)})\dashv a_{(2)}=\varepsilon(a)\xi.
		\]
		This is precisely what ensures that each group-like element \(g\) has a
		single inverse \(S(g)\) in \(\mathrm{Glike}(A)\). In a two-antipode
		variant, one would expect the corresponding group-like object to be a
		genuine generalized digroup, with possibly distinct left and right inverse
		data.
	\end{remark}
	
	\subsection{The digroup algebra and the factorization functor}
	
	A classical source of cocommutative Hopf algebras is the group algebra construction (\cite[Part I (2.3)]{Jantzen2003})
	\[
	G\longmapsto K[G].
	\]
	We now describe the corresponding construction in the dialgebraic setting.
	Here the starting point is not a group, but a digroup. Thus \(D\) is endowed
	with two associative products \(\vdash\) and \(\dashv\), a distinguished
	bar-unit \(\xi\), and an inverse map \(x\mapsto x^{-1}\) satisfying
	\[
	x\vdash x^{-1}=\xi=x^{-1}\dashv x
	\qquad
	(x\in D).
	\]
	The purpose of this subsection is to show that the linearization of a digroup (\cite[Section 4]{RSV} and \cite{RSSV})
	is a natural source of cocommutative Hopf dialgebras, and that the
	factorization theorem recovers the original digroup without loss of
	information.
	
	No finiteness assumption on \(D\) is needed, since \(K[D]\) will denote the free
	vector space of finite linear combinations of elements of \(D\).
	
	\begin{proposition}\label{prop:digroup-algebra-hopf-dialgebra}
		Let \(D\) be a digroup with products \(\vdash,\dashv\), distinguished
		bar-unit \(\xi\), and inverse map \(x\mapsto x^{-1}\). Let
		\[
		K[D]:=\bigoplus_{x\in D}Kx
		\]
		be the free \(K\)-vector space with basis \(D\). Extend the two products
		of \(D\) bilinearly to maps
		\[
		\vdash,\dashv:K[D]\otimes K[D]\longrightarrow K[D].
		\]
		For \(x\in D\), define
		\[
		\Delta(x)=x\otimes x,
		\qquad
		\varepsilon(x)=1,
		\qquad
		S(x)=x^{-1},
		\]
		and extend \(\Delta\), \(\varepsilon\), and \(S\) linearly to \(K[D]\).
		Then
		\[
		\bigl(K[D],\Delta,\varepsilon,\xi,\vdash,\dashv,S\bigr)
		\]
		is a cocommutative Hopf dialgebra.
	\end{proposition}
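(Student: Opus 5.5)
The plan is to check the conditions of Definition~\ref{def:hopf-dialgebra} one by one. Since every structure map is defined on the basis \(D\) and extended (bi)linearly, each condition becomes a statement about basis elements. First, \((K[D],\Delta,\varepsilon)\) is the standard set-like (grouplike) coalgebra on the set \(D\). Coassociativity and the counit axiom hold on basis elements, since \((\Delta\otimes\id)\Delta(x)=x\otimes x\otimes x=(\id\otimes\Delta)\Delta(x)\) and \((\varepsilon\otimes\id)\Delta(x)=x\). Cocommutativity is immediate from \(\tau(x\otimes x)=x\otimes x\).

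Next come the algebraic conditions. The products are associative and satisfy the three mixed dialgebra identities on \(K[D]\), because these are trilinear identities that hold on basis triples \(x,y,z\in D\) by the definition of a (generalized) digroup. For condition~(2), \(\xi\) is a bar-unit in \(D\), so \(\xi\vdash x=x=x\dashv\xi\) on the basis, and this extends by linearity. Condition~(3) is immediate: \(\Delta(\xi)=\xi\otimes\xi\) and \(\varepsilon(\xi)=1\). For condition~(4), the products are coalgebra morphisms because, for \(\star\in\{\vdash,\dashv\}\) and \(x,y\in D\), the element \(x\star y\) lies in \(D\). Hence
\[
\Delta(x\star y)=(x\star y)\otimes(x\star y)=(\star\otimes\star)(\id\otimes\tau\otimes\id)(x\otimes x\otimes y\otimes y),
\]
and \(\varepsilon(x\star y)=1=\varepsilon(x)\varepsilon(y)\). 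The antipode \(S\) is a coalgebra morphism because it maps the basis \(D\) into \(D\), and any linear map sending basis elements to set-like elements is a coalgebra morphism. The antipode identities reduce on a basis element \(x\) to \(x\vdash x^{-1}=\xi\) and \(x^{-1}\dashv x=\xi\), which is exactly the digroup inverse axiom. For a general \(a=\sum\lambda_x x\), both sides are linear in \(a\): the left-hand side is \(\sum\lambda_x\,x\vdash x^{-1}\) and the right-hand side is \(\varepsilon(a)\xi=\sum\lambda_x\xi\).

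The main obstacle is the balanced condition \(a\vdash\xi=\xi\dashv a\) in (2). It requires \(x\vdash\xi=\xi\dashv x\) for every \(x\in D\), and this is not among the stated digroup axioms. I would derive it from the inverse axioms together with the mixed identities. Using \(\xi=x^{-1}\dashv x\), we have
\[
\xi\dashv x=(x^{-1}\dashv x)\dashv x .
\]
Using \(\xi=x\vdash x^{-1}\), we have
\[
x\vdash\xi=x\vdash(x\vdash x^{-1}) .
\]
I would then connect these two expressions through the structure model. By Theorem~\ref{thm:gdigroup-structure}, write \(D\simeq G\times E\). Since the bar-unit \(\xi\) corresponds to some \((e,\eta)\), a single two-sided inverse forces the right inverse \((g^{-1},g^{-1}\bullet\eta)\) and the left inverse \((g^{-1},\eta)\) to coincide, so \(g\bullet\eta=\eta\) for all \(g\) (possibly after conjugating). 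Then
\[
(g,\alpha)\vdash(e,\eta)=(g,g\bullet\eta)=(g,\eta),
\qquad
(e,\eta)\dashv(g,\alpha)=(g,\eta),
\]
and the two sides agree, which proves balancedness. This step also relies on the inverse in a digroup being unique, so that the two one-sided inverses computed in the model must agree. Once balancedness is in hand, it extends linearly to all of \(K[D]\), which completes the verification.
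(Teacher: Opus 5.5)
Your proposal is correct. Apart from the balanced condition, it matches the paper's verification: basis-wise checks of the dialgebra identities, the coalgebra axioms, compatibility of $\vdash,\dashv$ and $S$ with $\Delta,\varepsilon$, and the antipode identities, followed by linear extension.

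The genuine difference is how you obtain $x\vdash\xi=\xi\dashv x$. The paper gets it in one line from the mixed identity $x\vdash(y\dashv z)=(x\vdash y)\dashv z$:
\[
x\vdash\xi=x\vdash(x^{-1}\dashv x)=(x\vdash x^{-1})\dashv x=\xi\dashv x .
\]
You had both substitutions $\xi=x^{-1}\dashv x$ and $\xi=x\vdash x^{-1}$ in hand, but inserted them into the wrong sides. Putting the first one into $x\vdash\xi$ closes the argument at once.

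Instead you go through the decomposition $D\simeq G\times E$ of Theorem~\ref{thm:gdigroup-structure}, and that route is also valid. The element $\xi$ corresponds to some $(e,\eta)$. Solving $(gh,g\bullet\beta)=(e,\eta)$ and $(hg,\beta)=(e,\eta)$ shows that right $\vdash$-inverses and left $\dashv$-inverses relative to $(e,\eta)$ are unique. They are $(g^{-1},g^{-1}\bullet\eta)$ and $(g^{-1},\eta)$ respectively. Since the digroup inverse is both, $\eta$ is $G$-fixed, and your coordinate computation then gives balancedness.

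Two small corrections to how you phrase this step. What you actually use is the uniqueness of one-sided inverses in the model, which the computation above supplies, not \emph{uniqueness of the digroup inverse}. The phrase ``possibly after conjugating'' is unnecessary.

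The trade-off is this. The paper's argument is elementary and self-contained. Yours depends on the externally cited structure theorem. In return, yours shows that in a digroup the halo coordinate of the distinguished bar-unit is fixed by $G$, and the converse is equally immediate. This is the pattern visible in Kinyon's example (Example~\ref{ex:kinyon-action-digroup}).
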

	
	\begin{proof}
		We verify the defining properties on basis elements. The corresponding
		identities on all of \(K[D]\) then follow by multilinearity.
		
		First, since \(D\) is a digroup, both \((D,\vdash)\) and \((D,\dashv)\)
		are semigroups, and the mixed dialgebra identities
		\[
		x\vdash (y\dashv z)=(x\vdash y)\dashv z,
		\]
		\[
		x\dashv (y\dashv z)=x\dashv (y\vdash z),
		\]
		and
		\[
		(x\vdash y)\vdash z=(x\dashv y)\vdash z
		\]
		hold for all \(x,y,z\in D\). Extending the products bilinearly, these same
		identities hold on \(K[D]\). Hence \(K[D]\) is a dialgebra.
		
		The distinguished bar-unit \(\xi\in D\) satisfies
		\[
		\xi\vdash x=x=x\dashv \xi
		\qquad
		(x\in D).
		\]
		Moreover, the balanced identity
		\[
		x\vdash \xi=\xi\dashv x
		\qquad
		(x\in D)
		\]
		follows from the inverse identities. Indeed, since
		\[
		x^{-1}\dashv x=\xi,
		\]
		we have
		\[
		x\vdash \xi
		=
		x\vdash (x^{-1}\dashv x)
		=
		(x\vdash x^{-1})\dashv x
		=
		\xi\dashv x.
		\]
		Thus \(K[D]\) is a balanced bar-unital dialgebra.
		
		We now check the coalgebra structure. For \(x\in D\),
		\[
		(\Delta\otimes \id)\Delta(x)
		=
		(\Delta\otimes \id)(x\otimes x)
		=
		x\otimes x\otimes x,
		\]
		and similarly
		\[
		(\id\otimes \Delta)\Delta(x)
		=
		x\otimes x\otimes x.
		\]
		Hence \(\Delta\) is coassociative. Also,
		\[
		(\varepsilon\otimes \id)\Delta(x)=\varepsilon(x)x=x,
		\qquad
		(\id\otimes \varepsilon)\Delta(x)=x\varepsilon(x)=x.
		\]
		Therefore \((K[D],\Delta,\varepsilon)\) is a coalgebra. It is
		cocommutative because
		\[
		\tau\Delta(x)=\tau(x\otimes x)=x\otimes x=\Delta(x).
		\]
		Furthermore,
		\[
		\Delta(\xi)=\xi\otimes\xi,
		\qquad
		\varepsilon(\xi)=1,
		\]
		so \(\xi\) is group-like.
		
		Next we show that the two products are coalgebra morphisms. Let
		\(\star\in\{\vdash,\dashv\}\). For \(x,y\in D\), one has
		\[
		\Delta(x\star y)
		=
		(x\star y)\otimes(x\star y).
		\]
		On the other hand,
		\begin{align*}
			(\star\otimes \star)
			\circ(\id\otimes\tau\otimes\id)
			\bigl(\Delta(x)\otimes\Delta(y)\bigr)
			&=
			(\star\otimes \star)
			\circ(\id\otimes\tau\otimes\id)
			\bigl((x\otimes x)\otimes(y\otimes y)\bigr)\\
			&=
			(\star\otimes \star)(x\otimes y\otimes x\otimes y)\\
			&=
			(x\star y)\otimes(x\star y).
		\end{align*}
		Thus \(\Delta\circ\star\) satisfies the required compatibility. Similarly,
		\[
		\varepsilon(x\star y)=1=\varepsilon(x)\varepsilon(y).
		\]
		Hence both \(\vdash\) and \(\dashv\) are coalgebra morphisms.
		
		Now consider the antipode map. Since \(S(x)=x^{-1}\), we have
		\[
		\Delta(S(x))
		=
		\Delta(x^{-1})
		=
		x^{-1}\otimes x^{-1}
		=
		(S\otimes S)(x\otimes x)
		=
		(S\otimes S)\Delta(x),
		\]
		and
		\[
		\varepsilon(S(x))=\varepsilon(x^{-1})=1=\varepsilon(x).
		\]
		Thus \(S\) is a coalgebra morphism.
		
		Finally, the inverse identities in \(D\) give, for every \(x\in D\),
		\[
		\sum_{(x)}x_{(1)}\vdash S(x_{(2)})
		=
		x\vdash S(x)
		=
		x\vdash x^{-1}
		=
		\xi
		=
		\varepsilon(x)\xi,
		\]
		and
		\[
		\sum_{(x)}S(x_{(1)})\dashv x_{(2)}
		=
		S(x)\dashv x
		=
		x^{-1}\dashv x
		=
		\xi
		=
		\varepsilon(x)\xi.
		\]
		By linearity, both antipode identities hold for every element of \(K[D]\).
		Therefore
		\[
		\bigl(K[D],\Delta,\varepsilon,\xi,\vdash,\dashv,S\bigr)
		\]
		is a cocommutative Hopf dialgebra.
	\end{proof}
	
	\begin{definition}\label{def:digroup-algebra}
		The cocommutative Hopf dialgebra \(K[D]\) constructed above will be called
		the \textbf{digroup algebra} of \(D\).
	\end{definition}
	
	The next result shows that the group-like elements of \(K[D]\) recover the
	original digroup.
	
	\begin{proposition}\label{prop:group-likes-recover-digroup}
		Let \(D\) be a digroup and let \(K[D]\) be its digroup algebra. Then
		\[
		\Glike(K[D])=D,
		\]
		where \(D\) is identified with the canonical basis of \(K[D]\). Under this
		identification, the products induced on \(\Glike(K[D])\) coincide with the
		original digroup products on \(D\), and the inverse induced by the antipode
		coincides with the original inverse map of \(D\).
	\end{proposition}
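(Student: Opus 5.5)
The proof has two parts: the easy inclusion \(D\subseteq\Glike(K[D])\), and the reverse inclusion, which is the standard fact that group-like elements of a coalgebra are linearly independent, specialized to the basis \(D\).

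\textbf{Easy inclusion.} By construction, every basis element \(x\in D\) satisfies \(\Delta(x)=x\otimes x\) and \(\varepsilon(x)=1\). Hence \(D\subseteq\Glike(K[D])\).

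\textbf{Reverse inclusion.} Let \(u=\sum_{x\in D}\lambda_x x\) be group-like, with finitely many nonzero \(\lambda_x\). Then
\[
\Delta(u)=\sum_x\lambda_x\, x\otimes x,
\qquad
u\otimes u=\sum_{x,y}\lambda_x\lambda_y\, x\otimes y .
\]
The elements \(x\otimes y\) with \(x,y\in D\) form a basis of \(K[D]\otimes K[D]\), so we may compare coefficients. This gives \(\lambda_x\lambda_y=0\) for \(x\neq y\), and \(\lambda_x^2=\lambda_x\) for every \(x\). Thus each \(\lambda_x\in\{0,1\}\), and at most one of them is nonzero. The counit condition \(\varepsilon(u)=\sum_x\lambda_x=1\) excludes \(u=0\). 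Hence exactly one \(\lambda_x\) equals \(1\), so \(u=x\in D\), and \(\Glike(K[D])=D\) as sets.

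\textbf{Structure.} The products on \(K[D]\) are the bilinear extensions of those of \(D\). Proposition~\ref{prop:glike-digroup} shows that the digroup structure on \(\Glike(K[D])\) is given by restricting these products. On basis elements the restriction returns exactly the original \(x\vdash y\) and \(x\dashv y\), and the distinguished bar-unit is \(\xi\) in both cases. The induced inverse is \(S(x)\), which equals \(x^{-1}\) by definition of \(S\). So the identification is an isomorphism of digroups, in fact the identity map. One may also note, via Theorem~\ref{thm:factorization-through-glike}, that \(\mathcal R(K[D])=\Conj(D)\).

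The only substantive step is the coefficient comparison in the reverse inclusion. It works because \(\{x\otimes y\}\) is a basis of the tensor product, and it uses no hypothesis on \(K\) beyond its being a field.
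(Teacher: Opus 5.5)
Your proof is correct and follows essentially the same route as the paper: you get \(D\subseteq\Glike(K[D])\) directly from the definition of \(\Delta\) and \(\varepsilon\), then compare coefficients in the basis \(\{x\otimes y\}\) and use the counit to force \(u\in D\), and finally note that the restricted products and \(S\) are the original ones. The only cosmetic difference is that you read off \(\lambda_x^2=\lambda_x\) from the diagonal coefficients, whereas the paper first reduces to \(u=\lambda d\) and then uses \(\lambda=\lambda^2\) and \(\varepsilon(u)=1\).
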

	
	\begin{proof}
		Every basis element \(d\in D\) is group-like because
		\[
		\Delta(d)=d\otimes d,
		\qquad
		\varepsilon(d)=1.
		\]
		Hence
		\[
		D\subseteq \Glike(K[D]).
		\]
		
		Conversely, let
		\[
		x=\sum_{d\in D}\lambda_d\,d\in K[D]
		\]
		be group-like. Since elements of \(K[D]\) are finite linear combinations
		of basis elements, only finitely many coefficients \(\lambda_d\) are nonzero.
		The condition \(\Delta(x)=x\otimes x\) gives
		\[
		\sum_{d\in D}\lambda_d(d\otimes d)
		=
		\sum_{d,e\in D}\lambda_d\lambda_e(d\otimes e).
		\]
		Comparing coefficients in the basis
		\[
		\{d\otimes e\mid d,e\in D\}
		\]
		of \(K[D]\otimes K[D]\), we obtain
		\[
		\lambda_d\lambda_e=0
		\qquad
		(d\neq e).
		\]
		Thus at most one coefficient \(\lambda_d\) is nonzero. Therefore
		\(x=\lambda d\) for some \(d\in D\) and some \(\lambda\in K\). The
		group-like condition now becomes
		\[
		\lambda(d\otimes d)=\lambda^2(d\otimes d),
		\]
		so \(\lambda=\lambda^2\). Since \(K\) is a field, this implies
		\[
		\lambda\in\{0,1\}.
		\]
		But a group-like element also satisfies
		\[
		1=\varepsilon(x)=\varepsilon(\lambda d)=\lambda.
		\]
		Hence \(x=d\). Therefore
		\[
		\Glike(K[D])=D.
		\]
		
		The compatibility with \(\vdash\), \(\dashv\), and the inverse map follows
		immediately from the construction of \(K[D]\): the products are the bilinear
		extensions of the products of \(D\), and the antipode satisfies
		\[
		S(d)=d^{-1}
		\qquad
		(d\in D).
		\]
	\end{proof}
	
	\begin{theorem}\label{thm:factorization-on-digroup-algebra}
		Let \(D\) be a digroup and let \(K[D]\) be its digroup algebra. Then the
		rack associated with \(K[D]\) by the adjoint construction coincides, on
		group-like elements, with the conjugation rack of \(D\). More precisely,
		for all \(x,y\in D\),
		\[
		x\triangleright y
		=
		\sum_{(x)}(x_{(1)}\vdash y)\dashv S(x_{(2)})
		=
		(x\vdash y)\dashv x^{-1}.
		\]
		Consequently, the factorization functor recovers the original digroup and
		its conjugation rack:
		\[
		K[D]
		\longmapsto
		\Glike(K[D])=D
		\longmapsto
		\Conj(D).
		\]
	\end{theorem}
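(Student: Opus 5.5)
The plan is to combine three earlier results: Proposition~\ref{prop:digroup-algebra-hopf-dialgebra}, Proposition~\ref{prop:group-likes-recover-digroup} and Theorem~\ref{thm:factorization-through-glike}.

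\emph{Step 1.} By Proposition~\ref{prop:digroup-algebra-hopf-dialgebra}, \(K[D]\) is a cocommutative Hopf dialgebra. Hence Proposition~\ref{prop:adjoint-rack-bialgebra} applies, and \(\Adj(K[D])\) is a cocommutative rack bialgebra with product
\[
a\triangleright b=\sum_{(a)}(a_{(1)}\vdash b)\dashv S(a_{(2)}).
\]

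\emph{Step 2.} Fix basis elements \(x,y\in D\). Since \(\Delta(x)=x\otimes x\), the Sweedler sum has a single term, so \(x\triangleright y=(x\vdash y)\dashv S(x)\). By the definition of the antipode, \(S(x)=x^{-1}\). This gives the displayed formula
\[
x\triangleright y=(x\vdash y)\dashv x^{-1}.
\]
The right-hand side is computed with the bilinear extensions of the products. On basis elements these are exactly the products of \(D\), so \(x\triangleright y\) is again a basis element, namely the conjugation product in \(D\). Using the digroup inverse here is legitimate: in a digroup \(x^{-1}\) is both a right \(\vdash\)-inverse and a left \(\dashv\)-inverse relative to \(\xi\). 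Proposition~\ref{prop:conj-rack-gdigroup} therefore identifies \((x\vdash y)\dashv x^{-1}\) with the operation of \(\Conj(D)\), whichever one-sided inverse is used.

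\emph{Step 3.} By Proposition~\ref{prop:group-likes-recover-digroup}, \(\Glike(K[D])=D\) as digroups. The identification is compatible with both products, and the antipode restricts to the inverse map. Theorem~\ref{thm:factorization-through-glike} then gives
\[
\mathcal R(K[D])=\Slike(\Adj(K[D]))=\Glike(K[D])=D,
\]
with rack product \((x\vdash y)\dashv S(x)=(x\vdash y)\dashv x^{-1}\). This is \(\Conj(\Glike(K[D]))=\Conj(D)\), and it yields the chain \(K[D]\mapsto D\mapsto\Conj(D)\).

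The only point requiring care is the identification in Step~3. The factorization theorem produces \(\Conj\) of the digroup \(\Glike(K[D])\) with its induced structure, while the statement refers to \(\Conj(D)\) for the original digroup. This is settled by the second half of Proposition~\ref{prop:group-likes-recover-digroup}, which says the identification \(\Glike(K[D])=D\) respects both products, the distinguished bar-unit \(\xi\), and the inverse. The identity map is therefore a digroup isomorphism, and by functoriality of \(\Conj\) it is also a rack isomorphism. Everything else is a one-line Sweedler computation.
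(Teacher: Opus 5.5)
Your proposal is correct and follows the paper's proof: the Sweedler sum collapses because \(\Delta(x)=x\otimes x\), the antipode gives \(S(x)=x^{-1}\) so the adjoint product is \((x\vdash y)\dashv x^{-1}\), and \(\Glike(K[D])=D\) yields the chain \(K[D]\mapsto D\mapsto\Conj(D)\). Your explicit appeal in Step~3 to Theorem~\ref{thm:factorization-through-glike} and to functoriality of \(\Conj\) only spells out what the paper leaves implicit in its final line.
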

	
	\begin{proof}
		By Proposition~\ref{prop:group-likes-recover-digroup}, the group-like
		elements of \(K[D]\) are exactly the basis elements indexed by \(D\).
		Thus, for \(x\in D\),
		\[
		\Delta(x)=x\otimes x.
		\]
		Substituting this into the adjoint rack formula gives
		\[
		x\triangleright y
		=
		\sum_{(x)}(x_{(1)}\vdash y)\dashv S(x_{(2)})
		=
		(x\vdash y)\dashv S(x).
		\]
		By construction of the digroup algebra,
		\[
		S(x)=x^{-1}.
		\]
		Therefore
		\[
		x\triangleright y=(x\vdash y)\dashv x^{-1},
		\]
		which is precisely the conjugation rack operation of the digroup \(D\).
		The final statement follows again from
		\[
		\Glike(K[D])=D.
		\]
	\end{proof}
	
	\begin{remark}\label{rem:digroup-algebra-no-loss}
		Theorem~\ref{thm:factorization-on-digroup-algebra} shows that the digroup
		algebra construction does not lose the underlying rack-theoretic
		information. The intermediate object of group-like elements is precisely
		the original digroup, and the rack obtained from the adjoint construction is
		its conjugation rack. Thus the assignment
		\[
		D\longmapsto K[D]
		\]
		plays, in the dialgebraic context, the same structural role that the group
		algebra construction plays for ordinary groups.
	\end{remark}
	
	We now record a concrete family of examples coming from Kinyon's construction
	\cite{Kinyon}.
	
	\begin{example}\label{ex:kinyon-action-digroup}
		Let \(H\) be a group acting on the left on a set \(M\). Suppose that
		\(M\) contains a fixed point \(e\), that is,
		\[
		h\cdot e=e
		\qquad
		(h\in H).
		\]
		Then
		\[
		D:=M\times H
		\]
		becomes a digroup with products
		\[
		(u,h)\vdash(v,k)=(h\cdot v,hk),
		\qquad
		(u,h)\dashv(v,k)=(u,hk),
		\]
		distinguished bar-unit
		\[
		\xi=(e,1),
		\]
		and inverse
		\[
		(u,h)^{-1}=(e,h^{-1}).
		\]
		Therefore \(K[D]\) is a cocommutative Hopf dialgebra. On basis elements,
		the associated rack operation is
		\[
		(u,h)\triangleright(v,k)
		=
		\bigl((u,h)\vdash(v,k)\bigr)\dashv (u,h)^{-1}.
		\]
		Using the two products, we obtain
		\[
		(u,h)\triangleright(v,k)
		=
		(h\cdot v,hkh^{-1}).
		\]
	\end{example}
	
	\begin{example}\label{ex:C2-digroup}
		Let
		\[
		H=C_2=\{1,s\}
		\]
		act on
		\[
		M=\{e,a,b\}
		\]
		by fixing \(e\) and exchanging \(a\) and \(b\). Thus
		\[
		s\cdot e=e,
		\qquad
		s\cdot a=b,
		\qquad
		s\cdot b=a.
		\]
		Let
		\[
		D=M\times C_2
		\]
		be the corresponding digroup. Its digroup algebra \(K[D]\) is a
		cocommutative Hopf dialgebra with
		\[
		\Delta(u,h)=(u,h)\otimes(u,h),
		\qquad
		\varepsilon(u,h)=1,
		\qquad
		S(u,h)=(e,h^{-1}).
		\]
		The rack operation on \(D=\Glike(K[D])\) is
		\[
		(u,h)\triangleright(v,k)
		=
		(h\cdot v,hkh^{-1}).
		\]
		Since \(C_2\) is abelian, this simplifies to
		\[
		(u,h)\triangleright(v,k)
		=
		(h\cdot v,k).
		\]
		In particular,
		\[
		(a,s)\triangleright(a,1)=(b,1),
		\qquad
		(a,s)\triangleright(b,1)=(a,1),
		\]
		while
		\[
		(a,1)\triangleright(v,k)=(v,k)
		\qquad
		((v,k)\in D).
		\]
		Thus the conjugation rack is nontrivial even though the acting group
		\(C_2\) is abelian.
	\end{example}
	
	\begin{remark}\label{rem:two-antipode-outlook}
		The construction above uses digroups, hence it belongs to the balanced
		one-antipode setting. For a generalized digroup \(D\), after fixing a
		bar-unit \(\xi\), one may still form the vector space
		\[
		K[D]=\bigoplus_{x\in D}Kx
		\]
		and define
		\[
		\Delta(x)=x\otimes x,
		\qquad
		\varepsilon(x)=1.
		\]
		The two products of \(D\) again extend bilinearly to \(K[D]\), producing a
		cocommutative bar-unital di-coalgebra. The difference appears at the
		antipode level. In a generalized digroup, the inverse data relative to
		\(\xi\) are generally one-sided:
		\[
		x\vdash x^{-1}_{r,\xi}=\xi,
		\qquad
		x^{-1}_{l,\xi}\dashv x=\xi,
		\]
		and the two elements \(x^{-1}_{r,\xi}\) and \(x^{-1}_{l,\xi}\) need not
		coincide. Thus the natural linear maps are
		\[
		S_{\vdash}(x):=x^{-1}_{r,\xi},
		\qquad
		S_{\dashv}(x):=x^{-1}_{l,\xi},
		\]
		rather than a single antipode. This suggests a two-antipode version of the
		linearization construction,
		\[
		\text{generalized digroups}
		\longrightarrow
		\text{two-antipode Hopf dialgebraic objects}
		\longrightarrow
		\mathbf{Rack}.
		\]
		In the digroup case the two inverse maps coincide, and the construction
		collapses to the one-antipode Hopf dialgebra \(K[D]\) studied above.
	\end{remark}
	
	\section{Conclusion and outlook}
	
	We have shown that the rack associated with a cocommutative Hopf dialgebra is
	controlled by an intermediate digroup of group-like elements. More precisely,
	the adjoint rack bialgebra construction, after restriction to set-like
	elements, is naturally isomorphic to the conjugation rack of
	\(\Glike(A)\). This gives a functorial factorization
	\[
	\mathbf{HopfDialg}
	\xrightarrow{\ \Glike\ }
	\mathbf{Dig}
	\hookrightarrow
	\mathbf{gDig}
	\xrightarrow{\ \Conj\ }
	\mathbf{Rack}.
	\]
	
	For finite generalized digroups \(D\simeq G\times E\), this factorization leads
	to explicit rack-combinatorial formulas. The left translations are products
	\[
	L_{(g,\alpha)}=c_g\times \rho(g),
	\]
	and this allows one to compute the inner permutation group, the
	left-translation cycle index, fixed-point data, orbit numbers and subrack
	structure in terms of group conjugation and the action of \(G\) on the halo
	\(E\).
	
	Finally, the digroup algebra construction \(D\mapsto K[D]\) provides a
	dialgebraic analogue of the classical group algebra construction. Its
	group-like elements recover the original digroup, and the rack obtained from
	the adjoint construction is exactly the conjugation rack of \(D\). This shows
	that the passage from digroups to cocommutative Hopf dialgebras preserves the
	rack-theoretic information carried by conjugation.
	
	A natural direction for future work is the development of a two-antipode
	version of this theory. For generalized digroups, right and left inverse data
	need not coincide, and therefore the linearized object should involve two
	antipode-like maps. Such a theory would extend the present balanced
	one-antipode framework and may provide a Hopf-dialgebraic setting in which
	generalized digroups, rather than only digroups, appear directly as group-like
	objects.

\end{document}